\newtheorem*{thma}{Theorem~A}
\newtheorem*{thmat}{Theorem~A'}
\newtheorem*{thmb}{Theorem~B}
\newtheorem{thm}{Theorem}[section]
\newtheorem{cor}[thm]{Corollary}
\newtheorem{prop}[thm]{Proposition}
\newtheorem{fact}[thm]{Fact}
\newtheorem{lemma}[thm]{Lemma}
\newtheorem{claim}{Claim}[thm]
\theoremstyle{definition}
\newtheorem{defn}[thm]{Definition}
\newtheorem{conv}[thm]{Convention}
\newtheorem*{q}{Open problem}
\theoremstyle{remark}
\newtheorem{remark}[thm]{Remark}
\newcommand*\axiomfont[1]{\textsf{\textup{#1}}}
\newcommand\ch{\axiomfont{CH}}
\newcommand\zfc{\axiomfont{ZFC}}
\newcommand\s{\subseteq}
\newcommand\br{\blacktriangleright}
\renewcommand{\restriction}{\mathbin\upharpoonright}
\renewcommand\mid{\mathrel{|}\allowbreak}
\newcommand{\stick}{{{\ensuremath \mspace{2mu}\mid\mspace{-12mu} {\raise0.6em\hbox{$\bullet$}}}}}
\DeclareMathOperator{\ult}{Ult}
\DeclareMathOperator{\crit}{crit}
\DeclareMathOperator{\id}{id}
\DeclareMathOperator{\reg}{Reg}
\DeclareMathOperator{\cf}{cf}
\DeclareMathOperator{\Tr}{Tr}
\DeclareMathOperator{\tr}{tr}
\DeclareMathOperator{\im}{Im}
\DeclareMathOperator{\otp}{otp}
\DeclareMathOperator{\acc}{acc}
\DeclareMathOperator{\nacc}{nacc}
\DeclareMathOperator{\U}{U}
\DeclareMathOperator{\pr}{Pr}
\DeclareMathOperator{\ssup}{ssup}
\author{Assaf Rinot}
\address{Department of Mathematics, Bar-Ilan University, Ramat-Gan 5290002, Israel.}
\urladdr{http://www.assafrinot.com}
\author{Jing Zhang}
\address{Department of Mathematics, Bar-Ilan University, Ramat-Gan 5290002, Israel.}
\urladdr{https://jingjzzhang.github.io/}
\subjclass[2010]{Primary 03E02; Secondary 03E35.}
\title{Complicated colorings, revisited}
\begin{document}
\date{Preprint as of February 16, 2022. For the latest version, visit \textsf{http://p.assafrinot.com/52}.}
\maketitle
\begin{abstract} 
In a paper from 1997, Shelah asked whether $\pr_1(\lambda^+,\lambda^+,\lambda^+,\lambda)$ holds for every inaccessible cardinal $\lambda$.
Here, we prove that an affirmative answer follows from $\square(\lambda^+)$. 
Furthermore, we establish that for every pair $\chi<\kappa$ of regular uncountable cardinals, $\square(\kappa)$ implies $\pr_1(\kappa,\kappa,\kappa,\chi)$.
\end{abstract}

\section{Introduction}

The subject matter of this paper is the following two anti-Ramsey coloring principles:

\begin{defn}[Shelah, \cite{Sh:282}]\label{def_pr1}
$\pr_1(\kappa, \kappa, \theta, \chi)$ asserts the existence of a coloring $c:[\kappa]^2 \rightarrow \theta$ such that for every  $\sigma<\chi$, every  
pairwise disjoint subfamily $\mathcal{A}  \subseteq [\kappa]^{\sigma}$ of size $\kappa$,
and every $\tau< \theta$, there is $(a,b) \in [\mathcal{A}]^2$ such that $c[a \times b] = \{\tau\}$.
\end{defn}

\begin{defn}[Lambie-Hanson and Rinot, \cite{paper34}]
  $\U(\kappa, \mu, \theta, \chi)$ asserts the existence of a coloring $c:[\kappa]^2\rightarrow \theta$ such that for every $\sigma < \chi$, every pairwise disjoint subfamily
  $\mathcal{A} \s [\kappa]^{\sigma}$ of size $\kappa$,
  and every $\tau < \theta$, there exists $\mathcal{B} \in [\mathcal{A}]^\mu$
  such that, for every $(a, b) \in [\mathcal{B}]^2$, 
$\min(c[a\times b])\ge\tau$.\footnote{Note that $\pr_1(\kappa,\kappa,\theta,\chi)$ implies $\U(\kappa,2,\theta,\chi)$. 
However, by \cite[Theorem~3.3]{paper35}, it does not imply $\U(\kappa,\kappa,\theta,\chi)$.}
  \end{defn}
The importance of this line of study --- especially in proving instances of $\pr_1(\ldots)$ and $\U(\ldots)$ with a large value of the $4^{\text{th}}$ parameter --- is explained in details in the introductions to \cite{paper18,paper15,paper34}.
In what follows, we survey a few milestone results, depending on the identity of $\kappa$.

$\br$ At the level of the first uncountable cardinal $\kappa=\aleph_1$, the picture is complete:
In his seminal paper \cite{TodActa}, Todor{\v{c}}evi{\'c  proved that $\pr_1(\aleph_1,\aleph_1,\aleph_1,2)$ holds,
improving upon a classic result of Sierpi{\'n}ski \cite{MR1556708} asserting that $\pr_1(\aleph_1,\aleph_1,2,2)$ holds.
In 1980, Galvin \cite{galvin} proved that $\pr_1(\aleph_1,\aleph_1,\theta,\aleph_0)$ is independent of $\zfc$ for any cardinal $\theta\in[2,\aleph_1]$.
Finally, a few years ago, by pushing further ideas of Moore \cite{lspace}, Peng and Wu \cite{MR3742590} 
proved that $\pr_1(\aleph_1,\aleph_1,\aleph_1,\chi)$ holds for every $\chi\in[2,\aleph_0)$.
As for the other coloring principle, and in contrast with Galvin's result, by \cite{paper34}, $\U(\aleph_1,\aleph_1,\theta,\aleph_0)$ holds for any cardinal $\theta\in[2,\aleph_1]$.

$\br$ At the level of the second uncountable cardinal, $\kappa=\aleph_2$,
a celebrated result of Shelah \cite{Sh:572} asserts that $\pr_1(\aleph_2,\aleph_2,\aleph_2,\aleph_0)$ is a theorem of $\zfc$.
Ever since, the following problem remained open:
\begin{q}[Shelah, \cite{Sh:572,Sh:1027}]\begin{enumerate} 
\item Does $\pr_1(\aleph_2,\aleph_2,\aleph_2,\aleph_1)$ hold? 

\item Does $\pr_1(\lambda^+,\lambda^+,\lambda^+,\lambda)$ hold for $\lambda$ inaccessible?
\end{enumerate}
\end{q}

In comparison, by \cite{paper34}, $\U(\lambda^+,\lambda^+,\theta,\lambda)$ is a theorem of $\zfc$ for every infinite regular cardinal $\lambda$ and every cardinal $\theta\in[2,\lambda^+]$.

$\br$ At the level of $\kappa=\lambda^+$ for $\lambda$ a singular cardinal, the main problem left open has to do with the $3^{\text{rd}}$ parameter of  $\pr_1(\ldots)$ rather than the $4^{\text{th}}$ (see \cite{MR1318912,EiSh:535,EiSh:819,MR2652193,MR3087058,MR3087059}).
This is a consequence of three findings. 
First, by the main result of \cite{paper13}, for every singular cardinal $\lambda$ and every cardinal $\theta\le\lambda^+$,
$\pr_1(\lambda^+,\lambda^+,\theta,2)$ implies $\pr_1(\lambda^+,\lambda^+,\theta,\cf(\lambda))$.
Second, by \cite[\S2]{paper45}, if $\lambda$ is the singular limit of strongly compact cardinals,
then $\pr_1(\lambda^+,\lambda^+,2,(\cf(\lambda))^+)$ fails, meaning the the first result cannot be improved. Third,
by \cite[\S2]{paper45}, $\pr_1(\lambda^+,\lambda^+,2,\lambda)$ outright fails for every singular cardinal $\lambda$.

The situation with $\U(\ldots)$ is slightly better. An analog of the first result may be found as \cite[Lemma~2.5 and Theorem~4.21(3)]{paper34}.
An analog of the second result may be found as \cite[Theorem~2.14]{paper34}.
In contrast, by \cite[Corollary~4.15]{paper34}, it is in fact consistent that $\U(\lambda^+,\lambda^+,\theta,\lambda)$ holds for every singular cardinal $\lambda$ and every cardinal $\theta\in[2,\lambda^+]$.

$\br$ At the level of a Mahlo cardinal $\kappa$, 
by \cite[Conclusion~4.8(2)]{Sh:365}, 
the existence of a stationary subset of $E^\kappa_{\ge\chi}$ that does not reflect at inaccessibles entails that
 $\pr_1(\kappa,\kappa,\theta,\chi)$ holds for all $\theta<\kappa$.
By \cite[\S5]{paper45}, the existence of nonreflecting a stationary subset of $\reg(\kappa)$ on which $\diamondsuit$ holds
entails that  $\pr_1(\kappa,\kappa,\kappa,\kappa)$ holds.

The situation with $\U(\ldots)$ is analogous:
By \cite[Theorem~4.23]{paper34}, the existence of a stationary subset of $E^\kappa_{\ge\chi}$ that does not reflect at inaccessibles entails that
$\U(\kappa,\kappa,\theta,\chi)$ holds for all $\theta<\kappa$.
By \cite[\S2]{paper36}, the existence of nonreflecting a stationary subset of $\reg(\kappa)$
entails that  $\U(\kappa,\kappa,\theta,\kappa)$ holds for all $\theta\le\kappa$.

$\br$ At the level of an abstract regular cardinal $\kappa\ge\aleph_2$, we mention two key results.
First, by \cite{paper15}, for every regular cardinal $\kappa\ge\aleph_2$
and every $\chi\in\reg(\kappa)$ such that $\chi^+<\kappa$, the existence of a nonreflecting stationary subset of $E^\kappa_{\ge\chi}$ entails that $\pr_1(\kappa,\kappa,\kappa,\chi)$ holds
(this is optimal, by \cite[Theorem~3.4]{paper35}, it is consistent that for some inaccessible cardinal $\kappa$,
$E^\kappa_\chi$ admits a nonreflecting stationary set, and yet, $\pr_1(\kappa,\kappa,\kappa,\chi^+)$ fails).
Second, by \cite{paper18}, for every regular cardinal $\kappa\ge\aleph_2$
and every $\chi\in\reg(\kappa)$ such that $\chi^+<\kappa$, $\square(\kappa)$ entails that $\pr_1(\kappa,\kappa,\kappa,\chi)$ holds.

Here, the situation with $\U(\ldots)$ is again better. By \cite[Corollaries 4.12 and 4.15]{paper34} and \cite[\S4]{paper36}, the analogs of the two results are true even without requiring ``$\chi^+<\kappa$''!

\vspace{10pt}

After many years without progress on the above mentioned Open Problem, in the last few years, there have been a few breakthroughs.
In an unpublished note from 2017, Todor{\v{c}}evi{\'c proved that $\ch$ implies a weak form of $\pr_1(\aleph_2,\aleph_2,\aleph_2,\aleph_1)$,
strong enough to entail one of its intended applications (the existence of a $\sigma$-complete $\aleph_2$-cc partial order whose square does not satisfy that $\aleph_2$-cc).
Next, in \cite[\S6]{paper45}, the authors obtained a full lifting of Galvin's strong coloring theorem, proving that for every infinite regular cardinal $\lambda$,
$\pr_1(\lambda^+,\lambda^+,\lambda^+,\lambda)$ holds assuming the stick principle $\stick(\lambda^+)$.
In particular, an affirmative answer to (1) follows from $2^{\aleph_1}=\aleph_2$.
Then, very recently, in \cite{MR4323594}, Shelah proved that
for every regular uncountable cardinal $\lambda$,
$\pr_1(\lambda^+,\lambda^+,\lambda^+,\lambda)$ holds assuming the existence of a nonreflecting stationary subset of $E^{\lambda^+}_{<\lambda}$.
So, by a standard fact from inner model theory, a negative answer to (1) implies that $\aleph_2$ is a Mahlo cardinal in G\"odel's constructible universe.

The main result of this paper reads as follows:
\begin{thma} For every regular uncountable cardinal $\lambda$, if $\square(\lambda^+)$ holds, then so does $\pr_1(\lambda^+,\lambda^+,\lambda^+,\lambda)$.
In particular, a negative answer to (1) implies that $\aleph_2$ is a weakly compact cardinal in G\"odel's constructible universe.
\end{thma}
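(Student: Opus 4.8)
The plan is to construct the required coloring from a $\square(\lambda^+)$-sequence by walking along it, refining the argument of \cite{paper18}; the novelty is reaching the fourth parameter $\lambda$ itself, i.e.\ the boundary case $\chi^+=\lambda^+$ left open there. Fix a $\square(\lambda^+)$-sequence $\vec C=\langle C_\alpha : \alpha<\lambda^+\rangle$, which we may take with $\otp(C_\alpha)\le\lambda$ for every $\alpha$; thus $\vec C$ is coherent and threadless. Walk along $\vec C$: for $\alpha<\beta$ let $\Tr(\alpha,\beta)=\langle\beta=\beta_0>\dots>\beta_n=\alpha\rangle$ be the trace obtained by iterating $\gamma\mapsto\min(C_\gamma\setminus\alpha)$, and for $\delta\le\beta$ put $\ell(\delta,\beta):=\max\{\sup(C_{\beta'}\cap\delta):\beta'\in\Tr(\delta,\beta)\}<\delta$. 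Coherence yields the gluing identity that drives everything: if $\ell(\delta,\beta)<\alpha<\delta$, then the walk from $\beta$ to $\alpha$ passes through $\delta$ --- $\Tr(\alpha,\beta)$ being $\Tr(\delta,\beta)$ followed by $\Tr(\alpha,\delta)$ --- so the segment of that walk at and below $\delta$ does not depend on $\beta$. Since $\lambda$ is regular uncountable, a point of cofinality $\lambda$ is \emph{never} an accumulation point of any $C_\gamma$; call such a point a \emph{pivot} (so $\otp(C_\delta)=\lambda$ whenever $\delta$ is a pivot), and for $\alpha<\beta$ let $\delta(\alpha,\beta)$ be the least pivot lying on $\Tr(\alpha,\beta)$ above $\alpha$, or a junk value if there is none.

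Define $c:[\lambda^+]^2\to\lambda^+$ so that $c(\alpha,\beta)$ is read off from $\delta(\alpha,\beta)$ and some coarse data of the sub-walk $\Tr(\alpha,\delta(\alpha,\beta))$ --- the crucial feature, by the gluing identity, being that this data is $\beta$-insensitive whenever $\delta(\alpha,\beta)$ is a genuine step of the walk reached with $\alpha$ above $\ell(\delta(\alpha,\beta),\beta)$. Using a partition of the set of pivots into $\lambda^+$ stationary pieces $\langle S_\tau : \tau<\lambda^+\rangle$ (Solovay) as the ``high-order'' ingredient, $c$ is designed so that, on any configuration in which $\delta(\alpha,\beta)$ is a genuine bottleneck, every value $\tau<\lambda^+$ can be forced --- the stationary piece containing $\delta(\alpha,\beta)$ contributing the bulk of $\tau$ and the $\beta$-stable sub-walk the rest.

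To confirm $\pr_1(\lambda^+,\lambda^+,\lambda^+,\lambda)$, fix $\sigma<\lambda$, a pairwise disjoint $\mathcal A\subseteq[\lambda^+]^\sigma$ of size $\lambda^+$, and $\tau<\lambda^+$. A routine pressing-down thins $\mathcal A$ to $\langle a_i : i<\lambda^+\rangle$ that is increasing ($\sup(a_i)<\min(a_j)$ for $i<j$) and ``uniform'' --- all $a_i$ of a common order type and displaying a common pattern relative to $\vec C$. The crux is that, for club-many pivots $\delta$, there are $i<j$ such that: (i) $\ell(\delta,\beta)<\min(a_i)$ for every $\beta\in a_j$ and $\sup(a_i)<\delta<\min(a_j)$, so $\delta$ is a common bottleneck for $a_j$; (ii) the walk from each $\beta\in a_j$ meets no pivot strictly between $\beta$ and $\delta$, and the walk from $\delta$ to each $\alpha\in a_i$ meets no pivot, whence $\delta(\alpha,\beta)=\delta$ for all $(\alpha,\beta)\in a_i\times a_j$; and (iii) on this configuration $c$ is constant, equal to $\tau$ --- the stationary piece containing $\delta$ and the uniform pattern of $a_i$ (which tames the sub-walks $\Tr(\alpha,\delta)$) jointly pinning down the value. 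Granting the crux, choose a valid pivot $\delta$ in the appropriate $S_\tau$ --- possible since the valid pivots contain a club while $S_\tau$ is stationary --- to get $c[a_i\times a_j]=\{\tau\}$, as required. The hypothesis that $\lambda$ is \emph{uncountable} enters essentially here, e.g.\ via a pressing-down carried out inside the set $C_\delta$, of order type $\lambda$ --- in accordance with Galvin's theorem that $\pr_1(\aleph_1,\aleph_1,\aleph_1,\aleph_0)$ is independent of $\zfc$ although $\square(\aleph_1)$ is a theorem of it.

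The main obstacle is the crux, and within it clauses (i)--(ii): that a \emph{single} pivot can bottleneck all of $a_j$ no matter which $\sigma<\lambda$ we were handed, and that such pivot-configurations recur along a club. Insisting on $\cf(\delta)=\lambda$ is exactly what makes one $\delta$ serve every $\sigma<\lambda$ at once, since $|a_j|=\sigma<\lambda=\cf(\delta)$ forces $\sup_{\beta\in a_j}\ell(\delta,\beta)<\delta$; but it is also what makes pivots elusive on walks, for --- being of cofinality $\lambda$ --- a pivot is never an accumulation point of the $C_\gamma$'s and so can occur on a walk only as a successor point of the preceding $C_{\beta'}$, a fragile event, unlike the lower-cofinality bottlenecks that suffice when $\chi^+<\lambda^+$ as in \cite{paper18}. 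The route is threadlessness: were the crux to fail for our uniform family, one could splice together the initial segments of the obstructing walks into a club of $\lambda^+$ cohering with $\vec C$, i.e.\ a thread --- absurd. Doing this cleanly is the technical heart; it is convenient first to replace $\vec C$, via a postprocessing function, by a derived $\square(\lambda^+)$-sequence whose walks are guaranteed to traverse pivots abundantly, and then to extract the required $i<j$ and $\delta$ by a pressing-down against $\langle a_i : i<\lambda^+\rangle$. Finally, the ``in particular'' clause is immediate from the main statement together with the standard fact that, in $L$, $\square(\aleph_2)$ holds unless $\aleph_2$ is weakly compact.
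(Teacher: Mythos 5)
There is a genuine gap, and it sits exactly where you locate the ``technical heart'' of your argument. Your plan is a one-step direct construction: colour via a Solovay partition $\langle S_\tau\mid\tau<\lambda^+\rangle$ of the pivots $E^{\lambda^+}_\lambda$, and then, given $\mathcal A$ and $\tau$, find a pivot $\delta\in S_\tau$ that is an \emph{exact common bottleneck} for some pair $a_i<a_j$. For the partition device to hit every prescribed $\tau$, the set of such valid bottlenecks must meet every $S_\tau$, i.e.\ it must contain a club of pivots. But the threadlessness argument you sketch (splicing obstructing walks into a thread) is the standard engine behind statements like Fact~\ref{chi1fact}, and what it yields is only a \emph{stationary} set $\Delta$ of bottleneck points --- and one that depends on $\mathcal A$. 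A stationary set need not meet a prescribed stationary piece $S_\tau$, so you cannot conclude that the colour $\tau$ is realized. This is not a presentational issue but the precise obstruction that kept the problem open: as the paper notes, by \cite[Theorem~3.4]{paper35} it is consistent that $E^\kappa_\chi$ carries a nonreflecting stationary set and yet $\pr_1(\kappa,\kappa,\kappa,\chi^+)$ fails, so ``stationarily many good configurations'' is provably insufficient to pin down all $\kappa$ colours when the fourth parameter is pushed to its maximum. Your worry about $\lambda=\aleph_0$ is also misplaced as stated: the relevant failure there is Galvin's independence of $\pr_1(\aleph_1,\aleph_1,\theta,\aleph_0)$, which concerns $\kappa=\aleph_1$, not $\kappa=\lambda^+=\aleph_1$ with $\chi=\lambda=\aleph_0$ in the role you assign it; the hypothesis $\max\{\chi,\aleph_1\}<\kappa$ of Theorem~B is what actually encodes the needed room.

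The paper's route is genuinely two-step and uses $\square(\lambda^+)$ twice. First, walking along a $C$-sequence chosen as in \cite[Lemma~5.1]{paper44} (with $\min(C_\alpha)$ taking each value stationarily often), it proves only $\U_1(\lambda^+,2,\lambda^+,\lambda)$: each produced pair $(a,b)$ is monochromatic in \emph{some} colour exceeding a prescribed $\epsilon$, arranged by inserting a point $\zeta$ with $h(\zeta)$ large into the walk --- the exact colour is never controlled, which is what makes this step feasible. Second, it shows (Lemma~\ref{lemma44}, via a generic ultrapower of $\mathcal P(\lambda^+)/F$ and the preservation of non-threadability under $\lambda$-cc forcing) that $\square(\lambda^+,{<}\lambda)$ forbids any $\lambda$-complete uniform filter on $\lambda^+$ from being weakly $\lambda$-saturated; applied to the filter $F_{c,\lambda}$ generated by the sets of achieved colours, this produces $\psi:\lambda^+\to\lambda$ with all fibers positive, so $\psi\circ c$ witnesses $\pr_1(\lambda^+,\lambda^+,\lambda,\lambda)$, and a standard argument upgrades $\lambda$ colours to $\lambda^+$. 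To repair your proposal you would need to supply precisely this second mechanism (or an equivalent Ulam-matrix-style substitute); the Solovay partition alone cannot do its job.
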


Thanks to the preceding theorem, we can now waive the hypothesis ``$\chi^+<\kappa$'' from \cite[Theorem~B]{paper18},
altogether getting a clear picture:
\begin{thmat}  For every pair $\chi<\kappa$ of regular uncountable cardinals, $\square(\kappa)$ implies $\pr_1(\kappa,\kappa,\kappa,\chi)$.
\end{thmat}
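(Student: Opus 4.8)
The plan is to deduce Theorem~A' from the two facts already at our disposal, namely Theorem~A of the present paper and \cite[Theorem~B]{paper18}, by a trivial case analysis. Since $\chi$ and $\kappa$ are cardinals with $\chi<\kappa$, we have $\chi^+\le\kappa$, so exactly one of the following holds: either $\chi^+<\kappa$, or $\chi^+=\kappa$. Note also that, as $\chi$ is uncountable and $\kappa>\chi$, necessarily $\kappa\ge\aleph_2$.

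First I would treat the case $\chi^+<\kappa$. Here $\kappa$ is a regular cardinal $\ge\aleph_2$ and $\chi$ is a regular cardinal below $\kappa$ satisfying $\chi^+<\kappa$, so \cite[Theorem~B]{paper18} applies verbatim and yields that $\square(\kappa)$ implies $\pr_1(\kappa,\kappa,\kappa,\chi)$.

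Next I would treat the case $\chi^+=\kappa$. Writing $\lambda:=\chi$, we have that $\lambda$ is a regular uncountable cardinal with $\kappa=\lambda^+$. Now Theorem~A applies to $\lambda$ and yields that $\square(\lambda^+)$ implies $\pr_1(\lambda^+,\lambda^+,\lambda^+,\lambda)$; that is, $\square(\kappa)$ implies $\pr_1(\kappa,\kappa,\kappa,\chi)$, as desired. Combining the two cases completes the proof.

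There is no genuine obstacle in this deduction: the entire content sits upstream in Theorem~A, whose proof is where the paper's machinery is spent, namely building, out of a $\square(\lambda^+)$-sequence, a coloring $c\colon[\lambda^+]^2\to\lambda^+$ that reacts correctly to every pairwise disjoint family of size $\lambda^+$ consisting of sets of size $<\lambda$. Once Theorem~A is granted, the passage to Theorem~A' — and hence the removal of the hypothesis ``$\chi^+<\kappa$'' from \cite[Theorem~B]{paper18} — is immediate.
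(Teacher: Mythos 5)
Your proposal is correct and is exactly the derivation the paper intends: Theorem~A' is obtained by splitting into the cases $\chi^+<\kappa$ (covered by \cite[Theorem~B]{paper18}) and $\chi^+=\kappa$ (covered by Theorem~A with $\lambda:=\chi$). The paper presents this only as the remark that Theorem~A lets one ``waive the hypothesis $\chi^+<\kappa$'' from the earlier result, so your case analysis is precisely its implicit proof.
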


Now, let us say a few words about the proof.
As made clear by the earlier discussion, in the case that $\kappa=\chi^+$, it is easier to prove $\U(\kappa,\kappa,\theta,\chi)$ than proving $\pr_1(\kappa,\kappa,\theta,\chi)$.
Therefore, we consider the following slight strengthening of $\U(\dots)$:

\begin{defn}
  $\U_1(\kappa, \mu, \theta, \chi)$ asserts the existence of a coloring $c:[\kappa]^2\rightarrow \theta$ such that for every $\sigma < \chi$, every pairwise disjoint subfamily
  $\mathcal{A} \s [\kappa]^{\sigma}$ of size $\kappa$,
  and every $\epsilon < \theta$, there exists $\mathcal{B} \in [\mathcal{A}]^\mu$
  such that, for every $(a, b) \in [\mathcal{B}]^2$, 
  there exists $\tau>\epsilon$ such that $c[a\times b]=\{\tau\}$.
\end{defn}

Shelah's proof from \cite{MR4323594} can be described as utilizing the hypothesis of his theorem twice: 
first to get $\U_1(\lambda^+,2,\lambda^+,\lambda)$,
and then to derive $\pr_1(\lambda^+,\lambda^+,\lambda^+,\lambda)$ from the latter.
Here, we shall follow a similar path,
building on the progress made in \cite[\S5]{paper44} with respect to walking along well-chosen $\square(\kappa)$-sequences.
We shall also present a couple of propositions translating $\U_1(\ldots)$ to $\pr_1(\ldots)$ and vice versa, 
demonstrating that $\U_1(\kappa,\mu,\theta,\chi)$ is of interest also with $\theta<\kappa$.
For instance, it will be proved that for every regular uncountable cardinal $\lambda$ that admits a stationary set not reflecting at inaccessibles (e.g., $\lambda=\aleph_1$),
 $\U_1(\lambda^+,2,\lambda,\lambda)$ iff $\pr_1(\lambda^+,\lambda^+,\lambda^+,\lambda)$.
Thus,  the core contribution of this paper reads as follows.
\begin{thmb} 
Suppose that $\chi\le\theta\le\kappa$ are infinite regular cardinals such that $\max\{\chi,\aleph_1\}<\kappa$.
If $\square(\kappa)$ holds, then so does $\U_1(\kappa,2,\theta,\chi)$.
\end{thmb}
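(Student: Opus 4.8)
The plan is to obtain the witnessing coloring by \emph{walking} along a carefully chosen $\square(\kappa)$-sequence. The route is modeled on Shelah's argument in \cite{MR4323594}, where $\pr_1(\lambda^+,\lambda^+,\lambda^+,\lambda)$ is derived from a nonreflecting stationary subset of $E^{\lambda^+}_{<\lambda}$: there the nonreflecting stationary set supplies coherent $C$-sets of small order type together with a club-guessing feature, and here that service is rendered instead by the thin ladders of a $\square(\kappa)$-sequence of the kind manufactured in \cite[\S5]{paper44}. Throughout, I would treat the walks technology of \cite[\S5]{paper44} as a black box supplying the combinatorial core.

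First I would fix a $\square(\kappa)$-sequence and, invoking \cite[\S5]{paper44}, pass to a \emph{well-chosen} one $\vec C=\langle C_\alpha\mid\alpha<\kappa\rangle$. The features to extract are: $\vec C$ is amenable; one has a good supply of nodes $\delta<\kappa$ with $\cf(\delta)$ large and with the tail of $C_\delta$ behaving uniformly; and at the nonaccumulation points of the $C_\alpha$'s one retains enough freedom to ``steer'' the ladder that hangs there, in particular to make it as long as desired. Along $\vec C$ I would run Todor\v{c}evi\'c's walks, recording for $\alpha<\beta<\kappa$ the trace $\Tr(\alpha,\beta)$ and the ordinals $\otp(C_\gamma\cap\alpha)$ attached to the nodes $\gamma\in\Tr(\alpha,\beta)$. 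The coloring $c\colon[\kappa]^2\to\theta$ is then defined as $\pi$ applied to a suitably \emph{localized} version of Todor\v{c}evi\'c's $\rho_1$, so that $c(\alpha,\beta)$ is read off a controlled portion of the walk from $\beta$ to $\alpha$ --- the final stretch, from the moment the walk first enters a small interval around $\alpha$ --- where $\pi\colon\kappa\to\theta$ is a fixed surjection with $\pi^{-1}[\{\tau\}]$ cofinal in $\kappa$ for every $\tau<\theta$.

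For the verification, fix $\sigma<\chi$, a pairwise disjoint $\mathcal A\s[\kappa]^\sigma$ of size $\kappa$, and $\epsilon<\theta$. Passing to a club $E\s\kappa$ of closure points of $\vec C$ at which $\mathcal A$ accumulates from below, and combining amenability with the steering clause, I would locate a node $\delta\in E$ with $\cf(\delta)>\sigma$ such that cofinally many $a\in\mathcal A$ below $\delta$ are each confined to a single gap of $C_\delta$, and such that the ladder hanging off the relevant $\nacc$-point of $C_\delta$ at each such location is long enough to force the localized invariant past $\epsilon$. A pigeonhole over the $\kappa$-many members of $\mathcal A$ lying above $\delta$ then isolates one $b\in\mathcal A$, $\min(b)>\delta$, whose walks into the region of $\delta$ are \emph{rigid}: each $\beta\in b$ passes through $\delta$ (using $\cf(\delta)>\sigma$ and the standard fact that, above a suitable bound, the walk from a fixed ordinal factors through $\delta$) and the portion of the trace lying above $\delta$ does not affect the localized invariant. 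Finally, picking one of the eligible $a\in\mathcal A$ below $\delta$ that --- being of size $\sigma$, hence short relative to the relevant cofinalities --- is confined to a single gap of the ladder at its location as well, one checks that the localized invariant is literally one and the same ordinal, above the threshold, for every $(\alpha,\beta)\in a\times b$. Its $\pi$-image is then a single $\tau>\epsilon$ with $c[a\times b]=\{\tau\}$, as required.

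The crux --- and the place where \cite[\S5]{paper44} is doing the real work --- is arranging a \emph{single} node $\delta$ that simultaneously serves as a limit of eligible $a$'s from below, is captured by an entire $b$ from above along a rigid trace, and sits over enough steering room both to make the localized invariant constant across the whole $\sigma\times\sigma$ rectangle $a\times b$ and to push it beyond $\epsilon$ --- all without the ``$\chi^+<\kappa$'' slack that \cite{paper18} relied on. What makes this feasible is precisely the weakening of $\U_1$ relative to $\pr_1$: since the monochromatic value on $a\times b$ need only exceed $\epsilon$ rather than equal a prescribed $\tau$, the delicate color-chasing that forces the $\pr_1$ proofs to budget $\chi^+$ many attempts degenerates here to a single monotone sweep, so the club/submodel and pigeonhole bookkeeping goes through even in the borderline case $\kappa=\chi^+$. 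The hypothesis $\max\{\chi,\aleph_1\}<\kappa$ is what secures the requisite room in the walks and in these auxiliary arguments.
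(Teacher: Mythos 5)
Your overall architecture is the right one --- a well-chosen $C$-sequence from \cite[\S5]{paper44}, Todor\v{c}evi\'c walks, a node $\delta$ through which the walks from an entire $b\in\mathcal A$ factor uniformly, and a pigeonhole over the lower family --- and this does match the skeleton of the paper's argument (which packages the ``factoring through $\delta$ with a uniform bound $\eta$'' step as the characteristic $\chi_1(\vec C)$). But the final step of your proposal contains a genuine gap. You define $c$ as $\pi$ composed with a localized $\rho_1$-type invariant, where $\pi\colon\kappa\rightarrow\theta$ is a surjection all of whose fibers are cofinal in $\kappa$, and you conclude that because the invariant has been pushed ``past $\epsilon$'', its $\pi$-image is some $\tau>\epsilon$. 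This is a non sequitur: since $\pi^{-1}[\{0\}]$ is cofinal, an arbitrarily large value of the invariant can still be sent to $0$. To repair it you would have to steer the invariant into the specific set $\pi^{-1}[\theta\setminus(\epsilon+1)]$, i.e., to hit a prescribed region of colors --- but that is exactly the $\pr_1$-style color-chasing whose avoidance you (correctly) identify as the whole point of working with $\U_1$, and it is precisely what is not available from $\square(\kappa)$ alone in the borderline case $\kappa=\chi^+$.

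The paper's mechanism for making the monochromatic value exceed $\epsilon$ is different and is the part your sketch is missing. The well-chosen $C$-sequence is arranged so that, for every $i<\kappa$, the set $\{\alpha<\kappa\mid\min(C_\alpha)=i\}$ is stationary; one sets $h(\alpha):=\min(C_\alpha)$ (truncated below $\mu$, where $\mu=\theta$ if $\theta<\kappa$ and $\mu=\chi$ otherwise) and colors by $d(\alpha,\beta):=\max(\im(\tr_h(\alpha,\beta)))$. After pigeonholing the ``noise'' $\sup\{d(\alpha,\gamma)\mid\alpha\in a\}\le\tau_0$ and $\sup\{d(\delta,\beta)\mid\beta\in b\}\le\tau_1$, one splices into every walk from $\delta$ down to $a$ a single ordinal $\zeta$ with $h(\zeta)=\tau_0+\tau_1+\epsilon+1$; this value simultaneously dominates both noises (forcing $d$ to be constant on $a\times b$, via the decomposition $d(\alpha,\beta)=\max\{d(\alpha,\gamma),d(\gamma,\delta),d(\delta,\beta)\}$) and exceeds $\epsilon$ by construction --- no surjection with cofinal fibers is ever applied. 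In the case $\theta=\kappa$ one instead outputs the actual ordinal on the walk realizing the maximal $h$-value, which exceeds $\alpha>\epsilon$. Your ``ladder long enough to force the invariant past $\epsilon$'' plus ``single gap of $C_\delta$'' does not substitute for this domination argument: it neither makes the invariant constant across all of $a\times b$ (the contributions of the upper segments $\tr(\delta,\beta)$ for distinct $\beta\in b$, and of the lower segments for distinct $\alpha\in a$, must be \emph{dominated}, not merely ``rigid'') nor, as explained above, yields a color $>\epsilon$ after applying $\pi$.
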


\section{Preliminaries}
In what follows, $\chi<\kappa$ denotes a pair of infinite regular cardinals.
$\reg(\kappa)$ stands for the set of all infinite and regular cardinals below $\kappa$.
Let $E^\kappa_\chi:=\{\alpha < \kappa \mid \cf(\alpha) = \chi\}$,
and define $E^\kappa_{\le \chi}$, $E^\kappa_{<\chi}$, $E^\kappa_{\ge \chi}$, $E^\kappa_{>\chi}$,  $E^\kappa_{\neq\chi}$ analogously.
A stationary subset $S\s\kappa$ is \emph{nonreflecting} (resp.~\emph{nonreflecting at inaccessibles}) iff there exists no $\alpha\in E^\kappa_{>\omega}$ (resp.~$\alpha$ a regular limit uncountable cardinal) such that $S\cap\alpha$ is stationary in $\alpha$.
For a set of ordinals $a$, we write 
$\ssup(a):=\sup\{\alpha+1\mid \alpha\in a\}$,
$\acc^+(a) := \{\alpha < \ssup(a) \mid \sup(a \cap \alpha) = \alpha > 0\}$,
$\acc(a) := a \cap \acc^+(a)$ and $\nacc(a) := a \setminus \acc(a)$.
For sets of ordinals that are not ordinals, $a$ and $b$, we write $a < b$ to express that $\alpha<\beta$ for all $\alpha\in a$ and $\beta\in b$.
For an ordinal $\sigma$ and a set of ordinals $A$, we write 
$[A]^\sigma$ for $\{ B\s A\mid \otp(B)=\sigma\}$.
In the special case that $\sigma=2$ and $\mathcal{A}$ is either an ordinal or a collection of sets
of ordinals, we interpret $[\mathcal{A}]^2$ as the collection of \emph{ordered} pairs $\{ (a,b)\in\mathcal A\times\mathcal A\mid a<b\}$.
In particular, $[\kappa]^2=\{(\alpha,\beta)\mid \alpha<\beta<\kappa\}$.

For the rest of this section, let us fix a \emph{$C$-sequence} $\vec C=\langle C_\alpha\mid\alpha<\kappa\rangle$ over $\kappa$, i.e.,
for every $\alpha<\kappa$, $C_\alpha$ is a closed subset of $\alpha$ with $\sup(C_\alpha)=\sup(\alpha)$.
The next definition is due to Todor{\v{c}}evi{\'c; see \cite{TodWalks} for a comprehensive treatment.
\begin{defn}[Todor{\v{c}}evi{\'c}] From $\vec C$, derive maps $\Tr:[\kappa]^2\rightarrow{}^\omega\kappa$,
$\rho_2:[\kappa]^2\rightarrow	\omega$,
$\tr:[\kappa]^2\rightarrow{}^{<\omega}\kappa$ and $\lambda:[\kappa]^2\rightarrow\kappa$, as follows.
Let $(\alpha,\beta)\in[\kappa]^2$ be arbitrary.
\begin{itemize}
\item $\Tr(\alpha,\beta):\omega\rightarrow\kappa$ is defined by recursion on $n<\omega$:
$$\Tr(\alpha,\beta)(n):=\begin{cases}
\beta,&n=0\\
\min(C_{\Tr(\alpha,\beta)(n-1)}\setminus\alpha),&n>0\ \&\ \Tr(\alpha,\beta)(n-1)>\alpha\\
\alpha,&\text{otherwise}
\end{cases}$$
\item $\rho_2(\alpha,\beta):=\min\{n<\omega\mid \Tr(\alpha,\beta)(n)=\alpha\}$;
\item $\tr(\alpha,\beta):=\Tr(\alpha,\beta)\restriction \rho_2(\alpha,\beta)$;
\item $\lambda(\alpha,\beta):=\max\{ \sup(C_{\Tr(\alpha,\beta)(i)}\cap\alpha) \mid i<\rho_2(\alpha,\beta)\}$.
\end{itemize}
\end{defn}

\begin{conv} From any coloring $h:\kappa\rightarrow\kappa$, derive a function $\tr_h:[\kappa]^2\rightarrow{}^{<\omega}\kappa$ via
$$\tr_h(\alpha,\beta):=\langle h(\Tr(\alpha,\beta)(i))\mid i<\rho_2(\alpha,\beta)\rangle.$$
\end{conv}

The next fact is quite elementary. See, e.g., \cite[Claim~3.1.2]{paper15} for a proof.
\begin{fact}\label{fact2} Whenever $\lambda(\gamma,\beta)<\alpha<\gamma<\beta<\kappa$, 
$\tr(\alpha,\beta)=\tr(\gamma,\beta){}^\smallfrown \tr(\alpha,\gamma)$.
\end{fact}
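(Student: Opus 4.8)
The plan is to prove the stronger claim that, under the hypothesis $\lambda(\gamma,\beta)<\alpha$, the walk from $\beta$ to $\alpha$ is forced to pass through $\gamma$ along exactly the trajectory of the walk from $\beta$ to $\gamma$, and to then continue from $\gamma$ as the walk from $\gamma$ to $\alpha$. Precisely, I would establish that $\Tr(\alpha,\beta)(i)=\Tr(\gamma,\beta)(i)$ for every $i\le\rho_2(\gamma,\beta)$, and that $\Tr(\alpha,\beta)(\rho_2(\gamma,\beta)+j)=\Tr(\alpha,\gamma)(j)$ for every $j<\omega$. The displayed identity then follows by reading off the appropriate initial segments.

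The crux is that the two walks take identical steps as long as they stay strictly above $\gamma$. Fix $i<\rho_2(\gamma,\beta)$ and set $\xi:=\Tr(\gamma,\beta)(i)$; then $\xi>\gamma>\alpha$, and by the definition of $\lambda(\gamma,\beta)$ together with the hypothesis, $\sup(C_\xi\cap\gamma)\le\lambda(\gamma,\beta)<\alpha$. Hence $C_\xi\cap[\alpha,\gamma)=\emptyset$, so that $C_\xi\setminus\alpha=C_\xi\setminus\gamma$ and consequently $\min(C_\xi\setminus\alpha)=\min(C_\xi\setminus\gamma)$. Feeding this into an induction on $i\le\rho_2(\gamma,\beta)$, the step from a common value $\xi>\gamma$ in the $\alpha$-walk is $\min(C_\xi\setminus\alpha)$, which we have just seen equals $\min(C_\xi\setminus\gamma)$, the corresponding step of the $\gamma$-walk; this keeps the two walks in lockstep. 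At the terminal index $i=\rho_2(\gamma,\beta)$ the identity reads $\Tr(\alpha,\beta)(\rho_2(\gamma,\beta))=\Tr(\gamma,\beta)(\rho_2(\gamma,\beta))=\gamma$.

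Once the $\alpha$-walk arrives at $\gamma$, I would invoke the memorylessness of the recursion defining $\Tr$: since each step depends only on the current ordinal, the tail $\langle\Tr(\alpha,\beta)(\rho_2(\gamma,\beta)+j)\mid j<\omega\rangle$ coincides with $\Tr(\alpha,\gamma)$. In particular $\rho_2(\alpha,\beta)=\rho_2(\gamma,\beta)+\rho_2(\alpha,\gamma)$; the length-$\rho_2(\gamma,\beta)$ initial segment of $\tr(\alpha,\beta)$ is $\tr(\gamma,\beta)$, and the following length-$\rho_2(\alpha,\gamma)$ segment is $\tr(\alpha,\gamma)$, giving $\tr(\alpha,\beta)=\tr(\gamma,\beta){}^\smallfrown\tr(\alpha,\gamma)$.

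I expect the most delicate point to be the inductive verification that the $\alpha$-walk lands exactly on $\gamma$ --- neither overshooting into $[\alpha,\gamma)$ nor skipping past $\gamma$ --- which is precisely what the computation $C_\xi\setminus\alpha=C_\xi\setminus\gamma$ secures at every step. One should also be careful to confirm that the hypothesis $\alpha>\lambda(\gamma,\beta)$ is genuinely applied at each relevant index, which it is, since the maximum defining $\lambda(\gamma,\beta)$ ranges over exactly the indices $i<\rho_2(\gamma,\beta)$ traversed by the argument.
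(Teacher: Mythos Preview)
Your proof is correct and is exactly the standard argument for this fact. The paper does not supply its own proof; it merely cites \cite[Claim~3.1.2]{paper15}, and the argument there proceeds along the same lines as yours: an induction showing $\Tr(\alpha,\beta)(i)=\Tr(\gamma,\beta)(i)$ for $i\le\rho_2(\gamma,\beta)$ via the key observation $C_\xi\cap[\alpha,\gamma)=\emptyset$, followed by the memorylessness of the walk once it reaches $\gamma$.
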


We now recall the characteristic $\lambda_2(\cdot,\cdot)$, a variation of $\lambda(\cdot,\cdot)$ having the property 
that $\lambda_2(\gamma, \beta) < \gamma$ whenever $0<\gamma < \beta < \kappa$.

\begin{defn}[\cite{paper18}] Define $\lambda_2:[\kappa]^2\rightarrow\kappa$ via
$$\lambda_2(\alpha,\beta):=\sup(\alpha\cap\{ \sup(C_\delta\cap\alpha) \mid \delta \in \im(\tr(\alpha, \beta))\}).$$
\end{defn}

\begin{fact}[{\cite[Lemma~4.7]{paper34}}]\label{lambda2}
Suppose that $\lambda_2(\gamma,\beta)<\alpha<\gamma<\beta<\kappa$.
 
Then $\tr(\alpha,\beta)$ end-extends $\tr(\gamma,\beta)$, and one of the following cases holds:
\begin{enumerate}
\item $\gamma\in\im(\tr(\alpha,\beta))$; or
\item $\gamma\in\acc(C_\delta)$ for $\delta:=\min(\im(\tr(\gamma,\beta)))$.
\end{enumerate}
\end{fact}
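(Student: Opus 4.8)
The plan is to compare, step by step, the walk from $\beta$ down to $\alpha$ with the walk from $\beta$ down to $\gamma$, showing they coincide until the latter terminates; this is the $\lambda_2$-analogue of the prolongation lemma (Fact~\ref{fact2}), but since the hypothesis $\lambda_2(\gamma,\beta)<\alpha$ is weaker than $\lambda(\gamma,\beta)<\alpha$ it must be argued afresh. Write $\rho:=\rho_2(\gamma,\beta)$ and, for $i\le\rho$, set $\eta_i:=\Tr(\gamma,\beta)(i)$, so that $\eta_0=\beta$, $\eta_\rho=\gamma$, and $\eta_i>\gamma$ for every $i<\rho$; note that $\delta:=\eta_{\rho-1}$ is precisely $\min(\im(\tr(\gamma,\beta)))$ and that $\gamma=\min(C_\delta\setminus\gamma)$, whence $\gamma\in C_\delta$. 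The engine of the argument is the dichotomy supplied by $\lambda_2(\gamma,\beta)<\alpha$: for every $\delta'\in\im(\tr(\gamma,\beta))$, the ordinal $\sup(C_{\delta'}\cap\gamma)$ is either equal to $\gamma$, or else it is $<\gamma$ and, being one of the finitely many ordinals whose supremum defines $\lambda_2(\gamma,\beta)$, it satisfies $\sup(C_{\delta'}\cap\gamma)\le\lambda_2(\gamma,\beta)<\alpha$.

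First I would prove by induction on $i\le\rho-1$ that $\Tr(\alpha,\beta)(i)=\eta_i$; since every such $\eta_i$ exceeds $\alpha$, this already gives $\rho_2(\alpha,\beta)\ge\rho$ and the asserted end-extension. The base case $i=0$ is immediate. For the inductive step, assuming $\Tr(\alpha,\beta)(i)=\eta_i$ with $i<\rho-1$, I compute $\Tr(\alpha,\beta)(i+1)=\min(C_{\eta_i}\setminus\alpha)$ and must show this equals $\eta_{i+1}=\min(C_{\eta_i}\setminus\gamma)$, i.e.\ that $C_{\eta_i}\cap[\alpha,\gamma)=\emptyset$. This is the one genuinely delicate point, and it is settled by the closedness of $C_{\eta_i}$: were $\sup(C_{\eta_i}\cap\gamma)=\gamma$, then $\gamma$ would be a limit point of $C_{\eta_i}$ lying below $\eta_i$, so by closedness $\gamma\in C_{\eta_i}$, giving $\eta_{i+1}=\min(C_{\eta_i}\setminus\gamma)=\gamma$ and hence $\rho=i+1\le\rho-1$, a contradiction. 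Therefore $\sup(C_{\eta_i}\cap\gamma)<\gamma$, and the dichotomy forces $\sup(C_{\eta_i}\cap\gamma)<\alpha$; consequently every element of $C_{\eta_i}$ below $\gamma$ is in fact below $\alpha$, so $C_{\eta_i}\cap[\alpha,\gamma)=\emptyset$ and $\Tr(\alpha,\beta)(i+1)=\eta_{i+1}$, closing the induction.

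It remains to inspect the single step out of $\delta=\Tr(\alpha,\beta)(\rho-1)$, where $\Tr(\alpha,\beta)(\rho)=\min(C_\delta\setminus\alpha)$. Since $\gamma\in C_\delta$, exactly one of $\gamma\in\acc(C_\delta)$ or $\gamma\in\nacc(C_\delta)$ holds. In the former case we land directly in conclusion (2), recalling $\delta=\min(\im(\tr(\gamma,\beta)))$. In the latter case $\sup(C_\delta\cap\gamma)<\gamma$, so the dichotomy again yields $\sup(C_\delta\cap\gamma)<\alpha$ and hence $C_\delta\cap[\alpha,\gamma)=\emptyset$; therefore $\Tr(\alpha,\beta)(\rho)=\min(C_\delta\setminus\alpha)=\min(C_\delta\setminus\gamma)=\gamma$, and as $\gamma>\alpha$ the walk has not yet reached $\alpha$, so it continues beyond $\gamma$ and $\gamma\in\im(\tr(\alpha,\beta))$, which is conclusion (1). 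The main obstacle throughout is the possibility that the walk to $\alpha$ peels off below $\gamma$ at an intermediate node; the closedness observation confines any such splitting to the terminal node $\delta$, and this is what simultaneously secures the end-extension and produces the two-case dichotomy.
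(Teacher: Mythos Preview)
Your proof is correct. The paper does not actually prove this statement: it records it as a Fact, citing \cite[Lemma~4.7]{paper34}, so there is no in-paper proof to compare against. Your argument follows what is the standard route for results of this shape: you use the definition of $\lambda_2$ to obtain the dichotomy ``either $\sup(C_{\eta_i}\cap\gamma)=\gamma$ or $\sup(C_{\eta_i}\cap\gamma)<\alpha$'' for each $\eta_i\in\im(\tr(\gamma,\beta))$, then run an induction along the walk, invoking closedness of $C_{\eta_i}$ to rule out the first alternative at every intermediate step, and finally analyze the terminal step at $\delta=\eta_{\rho-1}$ to produce the two cases. One trivial slip: where you write ``hence $\rho=i+1\le\rho-1$'', the correct conclusion is $\rho\le i+1\le\rho-1$; this does not affect the argument.
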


\begin{conv}[\cite{paper44}]\label{etanotation}
For every ordinal $\eta<\kappa$ and a pair $(\alpha,\beta)\in[\kappa]^2$, let
$$\eta_{\alpha,\beta}:=\min\{ n<\omega\mid \eta\in C_{\Tr(\alpha,\beta)(n)}\text{ or }n=\rho_2(\alpha,\beta)\}+1.$$
\end{conv}

\begin{defn}[{\cite[\S3]{paper45}}] $\chi_1(\vec C)$ stands for the supremum of $\sigma+1$ over all $\sigma<\kappa$ satisfying the following.
For every pairwise disjoint subfamily $\mathcal A\s[\kappa]^{\sigma}$ of size $\kappa$, 
there are a stationary set $\Delta\s\kappa$ and an ordinal $\eta<\kappa$
such that, for every $\delta\in\Delta$, there exist $\kappa$ many $b\in\mathcal A$ such that,
for every $\beta\in b$,
$\lambda(\delta,\beta)=\eta$ and $\rho_2(\delta,\beta)=\eta_{\delta,\beta}$.
\end{defn}

\begin{fact}[{\cite[\S3]{paper45}}]\label{chi1fact} 
If the two hold:
\begin{enumerate}
\item[($\aleph$)] for all $\alpha<\kappa$ and $\delta\in\acc(C_\alpha)$, $C_{\delta}=C_\alpha\cap\delta$;
\item[($\beth$)] for every club $D\s\kappa$, there exists $\gamma>0$ with $\sup(\nacc(C_\gamma)\cap D)=\gamma$,
\end{enumerate}
then $\chi_1(\vec C)=\sup(\reg(\kappa))$.
\end{fact}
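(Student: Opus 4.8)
I would prove the substantive half, $\chi_1(\vec C)\ge\sup(\reg(\kappa))$ (the reverse inequality is trivial when $\kappa$ is weakly inaccessible, since then $\chi_1(\vec C)\le\kappa=\sup(\reg(\kappa))$ outright, and is a routine counting obstruction at $\sigma=\mu$ when $\kappa=\mu^+$). Observe that the defining clause of $\chi_1(\vec C)$ is downward monotone in $\sigma$ --- given a pairwise disjoint family of order-type-$\sigma'$ sets, pad its members to order type $\sigma>\sigma'$ using ordinals from the ample gaps between consecutive members, keeping the family pairwise disjoint, and invoke the clause for $\sigma$ --- so it suffices to fix an infinite cardinal $\sigma<\sup(\reg(\kappa))$ and a pairwise disjoint $\mathcal A\s[\kappa]^\sigma$ of size $\kappa$ and exhibit a stationary $\Delta\s\kappa$ and an $\eta<\kappa$ as in the definition. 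First I would thin $\mathcal A$ to an enumeration $\langle a_i\mid i<\kappa\rangle$ with $\sup(a_i)<\min(a_j)$ whenever $i<j$ (possible since $\bigcup\mathcal A$ is cofinal in $\kappa$), and let $E$ be the club of all $\gamma\in\acc(\kappa)$ such that $a_j\s\gamma$ for every $j<\gamma$ and $\sup\{\sup(a_j)\mid j<\gamma\}=\gamma$.

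The core of the argument is a single-$\delta$ stabilization lemma: for every $\delta$ in some stationary $E_0\s E$ there are an ordinal $\eta_\delta<\delta$ and a set $I_\delta\s\kappa$ of size $\kappa$, with $\min(a_i)>\delta$ for all $i\in I_\delta$, such that $\lambda(\delta,\beta)=\eta_\delta$ and $\rho_2(\delta,\beta)=(\eta_\delta)_{\delta,\beta}$ for every $i\in I_\delta$ and every $\beta\in a_i$. Granting this, the assignment $\delta\mapsto\eta_\delta$ is regressive on $E_0$, so Fodor's lemma yields a stationary $\Delta\s E_0$ and a single $\eta<\kappa$ with $\eta_\delta=\eta$ throughout $\Delta$; then $\Delta$ and $\eta$ witness $\sigma+1\le\chi_1(\vec C)$, and as $\sigma$ was arbitrary we are done.

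For the stabilization lemma, fix $\delta\in E$ and consider the walks into $\delta$ from far above. By Fact~\ref{fact2}, whenever $\gamma\in(\delta,\kappa)$ satisfies $\delta\in C_\gamma$ and $\beta>\gamma$ satisfies $\lambda(\gamma,\beta)<\delta$, one has $\tr(\delta,\beta)=\tr(\gamma,\beta){}^\smallfrown\langle\gamma\rangle$ (as $\tr(\delta,\gamma)=\langle\gamma\rangle$ when $\delta\in C_\gamma$): thus $\gamma$ is the penultimate point of the walk from $\beta$ to $\delta$, and the final step contributes precisely the fixed ordinal $\sup(C_\gamma\cap\delta)$ to $\lambda(\delta,\beta)$. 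Writing $\gamma:=\Tr(\delta,\beta)(\rho_2(\delta,\beta)-1)$ in general, one checks by unravelling Convention~\ref{etanotation} that the conjunction ``$\lambda(\delta,\beta)=\eta_\delta$ and $\rho_2(\delta,\beta)=(\eta_\delta)_{\delta,\beta}$'' is equivalent to the combinatorial assertion that $\max(C_\gamma\cap\delta)=\eta_\delta$ --- whence $\delta\in\nacc(C_\gamma)$ --- together with $C_\xi\cap[\eta_\delta,\delta)=\emptyset$ for every $\xi\in\im(\tr(\delta,\beta))\setminus\{\gamma\}$. So the lemma reduces to producing, for stationarily many $\delta$, a single $\eta_\delta<\delta$ and $\kappa$-many blocks $a_i$ whose walks into $\delta$ all exit through a common entry point $\gamma=\gamma(i)$ in which $\delta$ is the immediate successor of $\eta_\delta$, while the earlier portion of each such walk keeps its $C$-trace out of the interval $[\eta_\delta,\delta)$.

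This final reduction is where the two hypotheses cooperate, and I expect it to be the main obstacle. Hypothesis~$(\beth)$, applied to clubs built from $E$, supplies cofinally-in-$\kappa$-many $\gamma$ for which $\nacc(C_\gamma)$ meets $E$ cofinally in $\gamma$ --- the nontriviality that makes suitable entry points exist at all --- while hypothesis~$(\aleph)$ both transports nonaccumulation data coherently along $\acc(C_\gamma)$ and confines the upper portions of walks coming out of whole blocks. A bookkeeping argument --- using that $\kappa$ is regular to satisfy $\kappa$-many demands and that $\sigma$ lies below a regular cardinal $<\kappa$ to homogenize each $\sigma$-sized block --- then assembles the stationary $E_0$, the values $\eta_\delta$, and the sets $I_\delta$. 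The delicate point is to force simultaneously, for a whole block $a_i$ and for stationarily many $\delta$, that the penultimate walk-point carries $\eta_\delta$ as its $C_\gamma$-predecessor while the remainder of every walk in the block stays above $[\eta_\delta,\delta)$; the rest is walks bookkeeping of a familiar kind.
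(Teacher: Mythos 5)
This Fact is imported from \cite[\S3]{paper45}; the present paper contains no proof of it, so there is nothing in-paper to compare against and I assess your argument on its own terms. Your reductions are sound: the restriction to the inequality $\chi_1(\vec C)\ge\sup(\reg(\kappa))$, the unravelling of the conjunction ``$\lambda(\delta,\beta)=\eta$ and $\rho_2(\delta,\beta)=\eta_{\delta,\beta}$'' into ``the penultimate trace point $\gamma$ satisfies $\max(C_\gamma\cap\delta)=\eta$ (so $\delta\in\nacc(C_\gamma)$) while every earlier trace point $\xi$ has $C_\xi\cap[\eta,\delta)=\emptyset$'', and the Fodor step converting local values $\eta_\delta$ into a single $\eta$ are all correct (the last is interchangeable with the dual route of assuming each $\Delta_\eta$ is nonstationary and diagonally intersecting the witnessing clubs).

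The problem is that the ``single-$\delta$ stabilization lemma'' you isolate \emph{is} the Fact, and your final paragraph describes it rather than proves it. Two concrete difficulties are left unaddressed. First, clause ($\beth$) hands you, per club $D$, one ordinal $\gamma$ with $\sup(\nacc(C_\gamma)\cap D)=\gamma$; the candidate $\delta$'s it yields are cofinal in that single $\gamma$, which is far from a stationary subset of $\kappa$. This part is repairable (apply ($\beth$) to $D\cap D'$ for an arbitrary club $D'$ and observe that the resulting $\delta$'s land in $D'$), but it must be said. Second, and more seriously: for a fixed entry point $\gamma$ and a block $b$, having every $\beta\in b$ walk into $\delta$ through $\gamma$ with the required control amounts to something like $\sup\{\lambda_2(\gamma,\beta)\mid\beta\in b\}<\eta<\delta<\gamma$. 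Each $\lambda_2(\gamma,\beta)$ is indeed below $\gamma$, but the supremum over a block of order type $\sigma$ need not be once $\cf(\gamma)\le\sigma$, and ($\beth$) gives no control whatsoever over $\cf(\gamma)$ --- so possibly no block admits a uniform bound below $\gamma$, let alone $\kappa$ many admitting a common one below the target $\eta$. Your appeal to ``$\sigma$ lies below a regular cardinal $<\kappa$'' does not engage this, because the relevant suprema are computed inside the ordinal $\gamma$, not inside a regular cardinal. This is exactly the point where, in the relatives of this result, extra hypotheses such as a nonreflecting stationary subset of $E^\kappa_{\ge\chi}$ or ``$\chi^+<\kappa$'' earn their keep, and where the argument of \cite[\S3]{paper45} must do its genuine work; until that step is supplied, the proof is incomplete.
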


\begin{defn}[Todor{\v{c}}evi{\'c}, \cite{TodActa}]\label{sqk} For a cardinal $\mu\le\kappa$,
$\square(\kappa,{<}\mu)$ asserts the existence of a sequence $\vec{\mathcal C}=\langle\mathcal C_\alpha\mid\alpha<\kappa\rangle$ such that
\begin{enumerate}
\item for every $\alpha<\kappa$, $\mathcal C_\alpha$ is nonempty collection of less than $\mu$ many closed subsets $C$ of $\alpha$ with $\sup(C)=\sup(\alpha)$;
\item for all $\alpha<\kappa$, $C\in\mathcal C_\alpha$ and $\delta\in\acc(C)$, $C\cap\delta\in\mathcal C_\delta$;
\item there exists no club $C$ in $\kappa$ such that $C\cap\alpha\in\mathcal C_\alpha$ for all $\alpha\in\acc(C)$.
\end{enumerate}
\end{defn}

The special case of $\square(\kappa,{<}\mu)$ with $\mu=2$ is denoted by $\square(\kappa)$.

\begin{fact}[Hayut and Lambie-Hanson, {\cite[Lemma~2.4]{MR3730566}}]\label{weakthread}  Clause~(3) of Definition~\ref{sqk} is preserved in any $\kappa$-cc forcing extension,
provided that $\mu<\kappa$.
\end{fact}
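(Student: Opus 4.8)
The plan is to reinterpret clause~(3) of Definition~\ref{sqk} as the assertion that a certain tree has no cofinal branch, and then to invoke the classical principle that a $\kappa$-cc forcing cannot add a cofinal branch to a sufficiently narrow tree of height $\kappa$. Throughout, $\kappa$ is a regular uncountable cardinal and $\vec{\mathcal C}=\langle\mathcal C_\alpha\mid\alpha<\kappa\rangle$ is a fixed ground-model sequence witnessing clauses~(1) and~(2). I would let $T$ be the tree whose underlying set is $\bigcup\{\mathcal C_\alpha\mid \alpha<\kappa\text{ limit}\}$, where a node $C$ sits at level $\sup(C)$ and $C'<_T C$ iff $C'=C\cap\sup(C')$ with $\sup(C')\in\acc(C)$. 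Coherence (clause~(2)) guarantees that the $<_T$-predecessors of any node form a chain that actually lies in $T$, so $T$ is a genuine tree; since each $\mathcal C_\alpha$ is nonempty, $T$ has height $\kappa$, and since $|\mathcal C_\alpha|<\mu$ for every $\alpha$, every level of $T$ has size $<\mu$. As $\vec{\mathcal C}\in V$ and $\kappa$-cc forcing preserves $\kappa$, the tree $T$ is computed identically in $V$ and in the extension. The decisive observation is that a cofinal branch $\{C_i\}$ of $T$ is precisely a thread of $\vec{\mathcal C}$: its union $\bigcup_i C_i$ is a club $E$ in $\kappa$ with $E\cap\alpha\in\mathcal C_\alpha$ for all $\alpha\in\acc(E)$, and conversely every such club induces a cofinal branch. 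Hence clause~(3) holds for $\vec{\mathcal C}$ if and only if $T$ has no cofinal branch.

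With this reduction in hand, the statement reduces to: if $T$ is a tree of height $\kappa$ with all levels of size $<\mu$, where $\mu<\kappa$, and $T$ has no cofinal branch, then $T$ still has no cofinal branch after $\kappa$-cc forcing. I would prove the contrapositive. Suppose $\mathbb P$ is $\kappa$-cc, $p\in\mathbb P$, and $\dot b$ is a $\mathbb P$-name with $p\Vdash$ ``$\dot b$ is a cofinal branch of $\check T$''. Working in $V$, form the subtree $U$ of all \emph{realizable} nodes, i.e.\ those $t\in T$ for which some $q\le p$ forces $\check t\in\dot b$; since $\dot b$ is a branch, $U$ is downward closed, and since $p$ forces $\dot b$ cofinal, $U$ has a realizable node on every level below $\kappa$ and hence has height $\kappa$, while its levels, being subsets of those of $T$, have size $<\mu$. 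If $U$ admitted a cofinal branch in $V$, that branch would already be a thread of $\vec{\mathcal C}$ in $V$, contradicting the standing hypothesis that clause~(3) holds in $V$; so it suffices to derive a contradiction from the assumption that $U$ is branchless.

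The heart of the matter is the following branch lemma, which I regard as the main obstacle: a $\kappa$-cc poset adds no cofinal branch to a branchless tree of height $\kappa$ whose levels have size $<\mu<\kappa$. To prove it I would build, by recursion on $\xi<\kappa$, conditions $p_\xi\le p$ together with nodes $t_\xi\in U$ so that $p_\xi$ forces $\check t_\xi\in\dot b$ while the family $\{t_\xi\mid\xi<\kappa\}$ is a $<_T$-antichain. Since a branch is a chain and so meets an antichain in at most one node, incomparable realizable nodes can be forced onto $\dot b$ only by \emph{incompatible} conditions; thus $\{p_\xi\mid\xi<\kappa\}$ would be an antichain of size $\kappa$ in $\mathbb P$, contradicting the $\kappa$-cc. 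The engine driving the recursion is that branchlessness forces perpetual splitting: restricting to the tall nodes of $U$ (those with extensions arbitrarily high, of which there is at least one on each level by the regularity of $\kappa$ together with narrowness), no tall node can fail to split above itself, for otherwise the tall nodes above it would constitute a cofinal chain.

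The genuinely delicate point---and the place where the hypothesis $\mu<\kappa$ rather than merely $\mu\le\kappa$ is essential---is to organize the splitting so that the antichain reaches full width $\kappa$ and not merely $2^{\aleph_0}$, which is all that naive binary splitting would yield. At a stage $\xi$ where the previously chosen top nodes have become predense above some level, one must pass to the cone above whichever node $\dot b$ is forced through and continue the construction there, using that each level carries fewer than $\mu<\kappa$ nodes to guarantee that this bookkeeping closes off below $\kappa$ and that a genuine $<_T$-antichain of length $\kappa$ emerges. I expect this antichain construction to be the crux; once it is in place the reduction via the tree $T$ is routine, and the preservation of clause~(3)---hence Fact~\ref{weakthread}---follows immediately.
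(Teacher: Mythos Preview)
The paper does not prove this fact; it merely cites \cite[Lemma~2.4]{MR3730566}. So there is no in-paper argument to compare against, and your proposal must be judged on its own merits.

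Your reduction is sound: the tree $T$ you describe is correct, a thread of $\vec{\mathcal C}$ is exactly a cofinal branch of $T$, and the levels of $T$ (hence of $U$) have size $<\mu$. You also correctly observe that it suffices to show that $U$, the tree of realizable nodes, has a cofinal branch in $V$. The gap is in how you propose to do this. You aim for a proof by contradiction via a $\kappa$-sized antichain of realizable nodes, but you yourself flag the crux---getting width $\kappa$ rather than $2^{\aleph_0}$---and the sketch you offer (``pass to the cone above whichever node $\dot b$ is forced through'') is not a ground-model construction and does not yield a proof. In fact the antichain route is a detour: a tree of height $\kappa$ with all levels of size $<\mu<\kappa$ \emph{always} has a cofinal branch, so the case you labor over is vacuous.

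The direct argument (which is essentially what appears in \cite{MR3730566}) is short. For $\alpha<\beta<\kappa$, the restriction map $\mathrm{Lev}_\beta(U)\to\mathrm{Lev}_\alpha(U)$ is well-defined (branches are downward closed) and \emph{surjective}: if $q\le p$ forces $\check t\in\dot b$ with $t$ at level $\alpha$, then some $q'\le q$ decides the level-$\beta$ node of $\dot b$, which must extend $t$. Hence $\alpha\mapsto|\mathrm{Lev}_\alpha(U)|$ is non-decreasing with values $<\mu<\kappa$; since $\kappa$ is regular, it is eventually constant, so the restriction maps are eventually bijections. Fixing any node at a sufficiently high level and taking its unique preimages and images yields a cofinal branch of $U$ in $V$, i.e., a thread of $\vec{\mathcal C}$ in $V$, contradicting clause~(3). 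This is where the hypothesis $\mu<\kappa$ is actually used, and it replaces your antichain construction entirely.
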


\section{Theorem~B}
\begin{thm}\label{thmb} Suppose that $\chi\le\theta\le\kappa$ are infinite regular cardinals such that
$\max\{\chi,\aleph_1\}<\kappa$.
If $\square(\kappa)$ holds, then so does $\U_1(\kappa,2,\theta,\chi)$.
\end{thm}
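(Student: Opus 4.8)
The plan is to walk along a well-chosen $\square(\kappa)$-sequence, in the spirit of the first half of Shelah's argument from \cite{MR4323594}. To begin, from $\square(\kappa)$ I would fix, invoking the technology of \cite[\S5]{paper44} (and, in case one passes through a forcing extension to homogenize it, Fact~\ref{weakthread} to keep clause~(3) of Definition~\ref{sqk} alive), a $\square(\kappa)$-sequence $\vec C=\langle C_\alpha\mid\alpha<\kappa\rangle$ that in addition satisfies clauses~($\aleph$) and ($\beth$) of Fact~\ref{chi1fact}. By that fact, $\chi_1(\vec C)=\sup(\reg(\kappa))$; since $\chi<\kappa$ is regular, $\chi\le\sup(\reg(\kappa))$, so $\sigma+1\le\chi_1(\vec C)$ for every $\sigma<\chi$. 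Let $\Tr,\rho_2,\tr,\lambda,\lambda_2$ and the positions $\eta_{\alpha,\beta}$ be derived from $\vec C$ as in the preliminaries. Finally, using that $\theta\le\kappa$ is an infinite regular cardinal and $\aleph_1<\kappa$, fix a function $h:\kappa\to\theta$ such that $h^{-1}(\{\tau\})$ is a stationary subset of $E^\kappa_{\ge\chi}$ for every $\tau<\theta$ (split $E^\kappa_{\ge\chi}$, which is stationary since $\chi<\kappa$, into $\theta$ stationary pieces).

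The coloring $c:[\kappa]^2\to\theta$ would read an $h$-value off a designated node of the walk. For $(\alpha,\beta)\in[\kappa]^2$, one singles out an intrinsically-defined \emph{pivot step} $\xi(\alpha,\beta)<\rho_2(\alpha,\beta)$ --- built from $\lambda_2(\alpha,\beta)$, the positions $\eta_{\alpha,\beta}$, and the local shape of the clubs $C_\gamma$ for $\gamma\in\im(\tr(\alpha,\beta))$ --- and one sets $c(\alpha,\beta):=h(\Tr(\alpha,\beta)(\xi(\alpha,\beta)))$. The design goal the pivot must meet is that, in the ``separated'' configuration produced by the verification below, the node $\Tr(\alpha,\beta)(\xi(\alpha,\beta))$ is one and the same ordinal for all relevant pairs $(\alpha,\beta)$, and it is an ordinal we will have arranged to have $h$-value strictly above the prescribed $\epsilon$.

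For the verification, fix $\sigma<\chi$, a pairwise disjoint $\mathcal A\s[\kappa]^\sigma$ of size $\kappa$, and $\epsilon<\theta$. Since $\sigma+1\le\chi_1(\vec C)$, the definition of $\chi_1(\vec C)$ hands us a stationary $\Delta\s\kappa$ and an $\eta<\kappa$ such that every $\delta\in\Delta$ admits $\kappa$-many $b\in\mathcal A$ which are \emph{good for $\delta$} in the sense that $\lambda(\delta,\beta)=\eta$ and $\rho_2(\delta,\beta)=\eta_{\delta,\beta}$ for all $\beta\in b$. Next I would fix an auxiliary ordinal $\delta^\star\in E^\kappa_{\ge\chi}$ with $\eta<\delta^\star$, with $h(\delta^\star)>\epsilon$, with $C_{\delta^\star}$ carrying a recognizable local marker that the pivot step will use to locate $\delta^\star$ inside a walk, and such that $\mathcal A$ has an element $a$ contained in a final segment $(\eta^\star,\delta^\star)$ of $\delta^\star$ that lies inside a single gap of every $C_\gamma$ met on the way down to $\delta^\star$; one then checks that admissible $\delta^\star$ occur unboundedly (this is where the stationarity of the fibers of $h$ and property ($\beth$) enter), picks one, and then picks $\delta\in\Delta$ above $\delta^\star$ and large enough. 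Among the $\kappa$-many $b$ good for $\delta$, throw away the fewer than $\kappa$ many not entirely above $\delta$; since the constraints $\lambda(\delta,\beta)=\eta$ and $\rho_2(\delta,\beta)=\eta_{\delta,\beta}$ confine $\tr(\delta,\beta)$ to fewer than $\kappa$ many possible shapes, pigeonhole the survivors to a common shape and keep one such $b$. Now, by Fact~\ref{fact2} (its hypothesis holds since $\lambda(\delta,\beta)=\eta<\min a$) together with Fact~\ref{lambda2}, for every $\alpha\in a$ and $\beta\in b$ the walk decomposes as $\tr(\alpha,\beta)=\tr(\delta,\beta){}^\smallfrown\tr(\delta^\star,\delta){}^\smallfrown\tr(\alpha,\delta^\star)$, and since $a$ sits inside a single gap at every level, $\tr(\alpha,\delta^\star)$ is independent of $\alpha\in a$. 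Tracing the definition of $\xi$ through this decomposition --- using the marker to pin it to $\delta^\star$ --- one gets that $c\restriction(a\times b)$ is the constant value $h(\delta^\star)>\epsilon$, which is exactly what $\U_1(\kappa,2,\theta,\chi)$ demands of the pair $(a,b)$.

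The heart of the difficulty is the clash between two demands on the pivot. First, $c$ must be genuinely \emph{constant} on $a\times b$ --- not merely bounded below, as would suffice for $\U(\ldots)$ --- whereas the head $\tr(\delta,\beta)$ of the walk truly varies with $\beta\in b$; so the pivot must be defined to be blind to precisely the moving part of the walk, which is what forces one to lean on the equalities $\lambda(\delta,\beta)=\eta$ and $\rho_2(\delta,\beta)=\eta_{\delta,\beta}$ to the fullest and to compress $a$ into a single gap at every level so that its own footprint in the walk collapses. Second, the pivot value must be pushable above an arbitrary $\epsilon<\theta$ even when $\theta<\kappa$: this bars cheap options such as letting the pivot be $\delta$ itself (there is no reason $h$ is unbounded on the stationary $\Delta$, and when $\theta<\kappa$ no $h:\kappa\to\theta$ is unbounded on every stationary set), which is exactly why the freely-chosen $\delta^\star$ has to be interpolated and why one must then check that interpolating it is compatible with a stationary reservoir of usable $\delta$. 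A subsidiary obstacle, present already at the outset, is conjuring the well-chosen $\square(\kappa)$-sequence satisfying ($\aleph$) and ($\beth$) out of bare $\square(\kappa)$ without inadvertently making it threadable --- which is where Fact~\ref{weakthread} and the $\square(\kappa)$-sequence machinery of \cite{paper44} do their work.
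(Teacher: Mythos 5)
Your overall architecture (walk along a $\square(\kappa)$-sequence prepared via \cite[\S5]{paper44}, invoke $\chi_1(\vec C)=\sup(\reg(\kappa))$ to get $\Delta$ and $\eta$, interpolate a node of large $h$-value to force the color above $\epsilon$) matches the paper's, but the proposal has a genuine gap at its center: the coloring is never defined. The ``pivot step'' $\xi(\alpha,\beta)$ and its ``recognizable local marker'' are exactly the hard part of the theorem, and listing design goals for them is not a construction. The paper's solution is concrete: it arranges the $C$-sequence so that $\{\alpha<\kappa\mid\min(C_\alpha)=i\}$ is stationary for every $i$, sets $h(\alpha):=\min(C_\alpha)$ (truncated into $\mu$), colors by $d(\alpha,\beta):=\max(\im(\tr_h(\alpha,\beta)))$, and proves (Claim~\ref{claim251}) that $d(\alpha,\beta)=\max\{d(\alpha,\gamma),d(\gamma,\beta)\}$ whenever $\lambda_2(\gamma,\beta)<\alpha<\gamma<\beta$ --- a coherence that depends on $h$ being read off $\min(C_\alpha)$ and on clause~($\aleph$), and which your arbitrary stationary partition of $E^\kappa_{\ge\chi}$ does not provide. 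The ``marker'' is then simply \emph{being the node of maximal $h$-value}: exact constancy of $c$ on $a\times b$ is achieved not by making the walks literally agree but by dominating the variable contributions ($d(\delta,\beta)\le\tau_1$ over $\beta\in b$, and $d(\alpha,\gamma)\le\tau_0$ over $\alpha\in a$) with an interpolated $\zeta$ satisfying $h(\zeta)>\tau_0+\tau_1+\epsilon$. Relatedly, your pigeonholing of $\tr(\delta,\beta)$ to ``fewer than $\kappa$ many shapes'' is false ($\tr(\delta,\beta)$ begins with $\beta$, which ranges over a $\kappa$-sized set); the paper needs no such uniformization.

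A second, independent defect is the order of your choices. You fix $a$ and the high-value node $\delta^\star$ \emph{before} choosing $\delta$, and then appeal to Fact~\ref{fact2}/Fact~\ref{lambda2} to decompose $\tr(\alpha,\beta)$ through $\delta^\star$. But that decomposition requires $\lambda_2(\delta^\star,\delta)<\min(a)$, and $\lambda_2(\delta^\star,\delta)$ depends on $\delta$ and can be arbitrarily close to $\delta^\star$ no matter how large $\delta$ is; choosing $\delta$ ``large enough'' does not help, and your clause ``every $C_\gamma$ met on the way down to $\delta^\star$'' is circular since the way down from $\delta$ is not yet determined. The paper resolves this by reversing the order --- first $\delta$ and $b$, then the high-value node $\zeta<\delta$, then $\gamma$ above $\lambda_2(\zeta,\delta)$, then $\varepsilon$ above the remaining $\lambda_2$-values, and only then $a:=a^\gamma_\varepsilon$ --- and this is precisely why it prepares, in advance, the club $C$, the array $\langle a^\gamma_\varepsilon\rangle$, and a stationary $\Gamma$ with a uniform bound $\tau_0$ on $\sup\{d(\alpha,\gamma)\mid\alpha\in a^\gamma_\varepsilon\}$, so that whichever $a$ is selected last still has a controlled contribution to the color. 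Without an analogous device, your argument cannot be completed.
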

\begin{proof} Suppose that $\square(\kappa)$ holds.
Then, by \cite[Lemma~5.1]{paper44}, we may fix a $C$-sequence $\vec C=\langle C_\alpha\mid\alpha<\kappa\rangle$ satisfying the following:
\begin{enumerate}
\item $C_{\alpha+1}=\{0,\alpha\}$ for every $\alpha<\kappa$;
\item for every club $D\s\kappa$, there exists $\gamma>0$ with $\sup(\nacc(C_\gamma)\cap D)=\gamma$;
\item for every $\alpha\in\acc(\kappa)$ and $\bar\alpha\in\acc(C_\alpha)$, $C_{\bar\alpha}=C_\alpha\cap\bar\alpha$;
\item for every $i<\kappa$, $\{ \alpha<\kappa\mid \min(C_\alpha)=i\}$ is stationary.
\end{enumerate}

Note that, by Fact~\ref{chi1fact}, $\chi_1(\vec C)=\sup(\reg(\kappa))$.
If $\theta<\kappa$, then let $\mu:=\theta$; otherwise, let $\mu:=\chi$.
Derive a coloring $h:\kappa\rightarrow\mu$ via
$$h(\alpha):=\begin{cases}\min(C_\alpha),&\text{if }\min(C_\alpha)<\mu;\\
0,&\text{otherwise}.\end{cases}$$
  
We shall walk along $\vec C$.
Define a coloring $d:[\kappa]^2\rightarrow\mu$ via $$d(\alpha,\beta):=\max(\im(\tr_h(\alpha,\beta))).$$

\begin{claim}\label{claim251} Suppose that $\alpha,\beta,\gamma$ are ordinals, and $\lambda_2(\gamma,\beta)<\alpha<\gamma<\beta<\kappa$.

Then $\im(\tr_h(\alpha,\beta))=\im(\tr_h(\alpha,\gamma))\cup \im(\tr_h(\gamma,\beta))$.
In particular, $d(\alpha,\beta)=\max\{d(\alpha,\gamma),d(\gamma,\beta)\}$.
\end{claim}
\begin{proof}By Fact~\ref{lambda2}, one of the following cases holds:
\begin{enumerate}
\item[$\br$] $\gamma\in\im(\tr(\alpha,\beta))$. 
In this case, $\tr(\alpha,\beta)=\tr(\gamma,\beta){}^\smallfrown\tr(\alpha,\gamma)$,
so we done.
\item[$\br$] $\gamma\in\acc(C_\delta)$ for $\delta:=\min(\im(\tr(\gamma,\beta)))$.
In this case, $\tr(\alpha,\beta)=\tr(\delta,\beta){}^\smallfrown\allowbreak\tr(\alpha,\delta)$,
so that $\im(\tr_h(\alpha,\beta))=\im(\tr_h(\alpha,\delta))\cup \im(\tr_h(\delta,\beta))$.
Since $\gamma\in\acc(C_\delta)$, Clause~(3) above
and the definition of the function $h$ together imply that
$\tr_h(\alpha,\delta)=\tr_h(\alpha,\gamma)$.
In addition, $\tr(\gamma,\beta)=\tr(\delta,\beta){}^\smallfrown\langle\delta\rangle$,
so that $\im(\tr_h(\gamma,\beta))=\im(\tr_h(\delta,\beta))\cup\{h(\delta)\}$.
Since $h(\delta)\in\im(\tr_h(\alpha,\delta))$,
altogether,    
\[\im(\tr_h(\alpha,\gamma))\cup \im(\tr_h(\gamma,\beta))=\im(\tr_h(\alpha,\delta))\cup \im(\tr_h(\delta,\beta)).\qedhere\]
\end{enumerate}
\end{proof}

We are now ready to define the sought coloring $c$.
If $\mu=\theta$, then let $c:=d$, and otherwise define $c:[\kappa]^2\rightarrow\theta$ via
$$c(\alpha,\beta):=\max\{\xi\in\im(\tr(\alpha,\beta))\mid h(\xi)=d(\alpha,\beta)\}.$$
To see that $c$ witnesses $\U_1(\kappa,2,\theta,\chi)$, 
suppose that we are given $\epsilon<\theta$, $\sigma<\chi$ and a $\kappa$-sized pairwise disjoint subfamily $\mathcal A\s[\kappa]^{\sigma}$;
we need to find $\tau>\epsilon$ and $(a,b)\in[\mathcal A]^2$ such that $c[a\times b]=\{\tau\}$.
As $\sigma<\chi\le\chi_1(\vec C)$, we may fix a stationary subset $\Delta\s\kappa$ and an ordinal $\eta<\kappa$
such that, for every $\delta\in\Delta$, there exists $b\in\mathcal A$ with $\min(b)>\delta$ such that
$\lambda(\delta,\beta)=\eta$ for every $\beta\in b$.
Set $\eta':=\max\{\eta,\epsilon\}$.

Consider the club $C:=\{\gamma<\kappa\mid \sup\{\min(a)\mid a\in\mathcal A\cap\mathcal P(\gamma)\}=\gamma\}$.
For all $\gamma\in C$ and $\varepsilon<\gamma$,
fix $a^\gamma_\varepsilon\in\mathcal A\cap\mathcal P(\gamma)$ with $\min(a^\gamma_\varepsilon)>\varepsilon$;
as $|a^\gamma_\varepsilon|<\mu$,
$\tau^\gamma_\varepsilon:=\sup\{ d(\alpha,\gamma)\mid \alpha\in a^\gamma_\varepsilon\}$ is $<\mu$.
Fix some stationary $\Gamma\s C\cap E^\kappa_{\neq\mu}$ along with $\tau_0<\mu$ such that,
for every $\gamma\in\Gamma$, $\sup\{ \varepsilon<\gamma\mid \tau^\gamma_\varepsilon\le\tau_0\}=\gamma$.

By Clause~(4), for each $i<\mu$, $H_i:=\{\alpha<\kappa\mid h(\alpha)=i\}$ is stationary,
so, fix $\delta\in\Delta\cap\bigcap_{i<\mu}\acc^+(H_i\cap\acc^+(\Gamma\setminus \eta'))$.
Pick $b\in\mathcal A$ with $\min(b)>\delta$ such that
$\lambda(\delta,\beta)=\eta$ for every $\beta\in b$. 
As $|b|<\mu$, $\tau_1:=\sup\{ d(\delta,\beta)\mid \beta\in b\}$ is $<\mu$.
If $\epsilon<\mu$, then pick $\zeta\in H_{\tau_0+\tau_1+\epsilon+1}\cap\acc^+(\Gamma\setminus\eta')$;
otherwise, pick $\zeta\in H_{\tau_0+\tau_1+1}\cap\acc^+(\Gamma\setminus\eta')$.
Next, pick $\gamma\in\Gamma$ above $\max\{\lambda_2(\zeta,\delta),\eta'\}$.
Finally, pick $\varepsilon<\gamma$ above $\max\{\lambda_2(\gamma,\zeta),\lambda_2(\zeta,\delta),\eta'\}$ such that $\tau^\gamma_{\varepsilon}\le\tau_0$,
and then set $a:=a^\gamma_\varepsilon$.

\begin{claim} Let $\alpha\in a$ and $\beta\in b$. Then:
\begin{enumerate}
\item[(i)] $\max\{\lambda_2(\gamma,\zeta),\lambda_2(\zeta,\delta),\lambda(\delta,\beta),\epsilon\}<\varepsilon<\alpha<\gamma<\zeta<\delta<\beta$;
\item[(ii)] $c(\alpha,\beta)=c(\gamma,\delta)>\epsilon$.
\end{enumerate}
\end{claim}
\begin{proof} (i) This is clear, recalling that $\eta'=\max\{\lambda(\delta,\beta),\epsilon\}$.

(ii) From $\lambda(\delta,\beta)<\alpha<\delta<\beta$ and Fact~\ref{fact2}, we infer that $\tr(\alpha,\beta)=\tr(\delta,\beta){}^\smallfrown\allowbreak\tr(\alpha,\delta)$,
so that $d(\alpha,\beta)=\max\{d(\delta,\beta),d(\alpha,\delta)\}$.
By Clause~(i) and Claim~\ref{claim251}, 
$$d(\alpha,\delta)=\max\{d(\alpha,\zeta),d(\zeta,\delta)\}\ge h(\zeta)>\tau_1\ge d(\delta,\beta).$$
Consequently, $d(\alpha,\beta)=d(\alpha,\delta)$ and $c(\alpha,\beta)=c(\alpha,\delta)$.
By Clause~(i) and Claim~\ref{claim251}, $\im(\tr_h(\alpha,\delta))=\im(\tr_h(\alpha,\gamma)\cup\im(\tr_h(\gamma,\delta))$.
As $d(\alpha,\gamma)\le\tau_0<h(\zeta)\le d(\gamma,\delta)$,
it follows that $d(\alpha,\delta)=d(\gamma,\delta)$ and $c(\alpha,\delta)=d(\gamma,\delta)$.
Altogether, $c(\alpha,\beta)=c(\gamma,\delta)$.

Now, if $\theta<\kappa$, then $\epsilon<\theta=\mu$ and $c=d$, so that $c(\alpha,\beta)=d(\gamma,\delta)\ge h(\zeta)>\epsilon$.
Otherwise, $c(\alpha,\beta)\ge\min(\im(\tr(\alpha,\beta)))>\alpha>\epsilon$.
\end{proof}

Set $\tau:=c(\gamma,\delta)$. Then $\tau>\epsilon$ and $c[a\times b]=\{\tau\}$, as sought.
\end{proof}
\begin{remark} The preceding proof makes it clear that the auxiliary coloring $d$ witnesses $\U_1(\kappa,2,\mu,\chi)$.
By Fact~\ref{lambda2}, the coloring $d$ is moreover \emph{closed} 
in the sense that, for all $\beta<\kappa$ and $i<\theta$,
the set $\{\alpha<\beta\mid c(\alpha,\beta)\le i\}$ is closed below $\beta$.
So, by \cite[Lemma~4.2]{paper34}, $d$ witnesses $\U(\kappa,\kappa,\mu,\chi)$, as well.
\end{remark}

\section{Connecting $\U_1$ with $\pr_1$}
Throughout this section, $\chi<\kappa$ is a pair of infinite regular cardinals, and $\theta$ is a regular cardinal $\le\kappa$.
Let $\mathbb A^\kappa_\chi$ denote the collection of all pairwise disjoint subfamilies $\mathcal A\s\mathcal P(\kappa)$ such that $|\mathcal A|=\kappa$ 
and $\sup\{|a|\mid a\in\mathcal A\}<\chi$.
Given a coloring $c:[\kappa]^2\rightarrow\theta$,
for every $\mathcal A\s\mathcal P(\kappa)$, let $T_c(\mathcal A)$ be the set of all $\tau<\theta$ such that, for some $(a,b)\in[\mathcal A]^2$,
$c[a\times b]=\{\tau\}$.
The next definition appears (with a slightly different notation) in Stage~B in the proof of \cite[Theorem~1.1]{MR4323594}:
\begin{defn} For every coloring $c:[\kappa]^2\rightarrow\theta$, let
$$F_{c,\chi}:=\{ T\s\theta\mid \exists \mathcal A\in\mathbb A^\kappa_\chi\,[T_c(\mathcal A)\s T]\}.$$
\end{defn}

\begin{prop}\label{lemma22} Suppose that a coloring $c:[\kappa]^2\rightarrow\theta$ witnesses $\U_1(\kappa,2,\theta,\chi)$,
and $\lambda$ is some cardinal. Then:
\begin{enumerate}
\item  $F_{c,\chi}$ is a $\chi$-complete uniform filter on $\theta$;
\item If every $\chi$-complete uniform filter on $\theta$ is not weakly $\lambda$-saturated, then $\pr_1(\kappa,\kappa,\lambda,\chi)$ holds.
\end{enumerate}
\end{prop}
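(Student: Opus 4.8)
The plan is to prove the two clauses in order, using the witnessing coloring $c$ for $\U_1(\kappa,2,\theta,\chi)$ throughout.

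For clause (1), I would first check that $F_{c,\chi}$ is upward closed, which is immediate from the definition (if $T\in F_{c,\chi}$ via $\mathcal A$, and $T\s T'$, the same $\mathcal A$ works for $T'$). For $\chi$-completeness, suppose $\langle T_i\mid i<\sigma\rangle$ with $\sigma<\chi$ are all in $F_{c,\chi}$, witnessed by families $\mathcal A_i\in\mathbb A^\kappa_\chi$; I want to find a single $\mathcal A\in\mathbb A^\kappa_\chi$ with $T_c(\mathcal A)\s\bigcap_i T_i$. The natural move is a diagonal/amalgamation construction: build a $\kappa$-sized pairwise disjoint family $\mathcal A=\{a_\xi\mid \xi<\kappa\}$ where each $a_\xi$ is obtained by gluing together one member from each $\mathcal A_i$ (chosen to lie in a common interval of ordinals, using that each $\mathcal A_i$ is cofinal in $\kappa$), so $|a_\xi|\le \sup_i|a_\xi\cap(\text{piece from }\mathcal A_i)|\cdot\sigma<\chi$ since $\chi$ is regular. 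Then for any $(a_\xi,a_{\xi'})\in[\mathcal A]^2$ with $c[a_\xi\times a_{\xi'}]=\{\tau\}$, restricting to the $\mathcal A_i$-pieces shows $\tau\in T_c(\mathcal A_i)\s T_i$ for every $i$, hence $\tau\in\bigcap_i T_i$. For uniformity (i.e.\ $F_{c,\chi}$ contains all co-$<\theta$ sets, equivalently every bounded subset of $\theta$ is in the dual ideal), fix $\epsilon<\theta$; I must produce $\mathcal A\in\mathbb A^\kappa_\chi$ with $T_c(\mathcal A)\cap(\epsilon+1)=\emptyset$, i.e.\ every monochromatic pair over $\mathcal A$ gets a color $>\epsilon$. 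But this is exactly what $\U_1(\kappa,2,\theta,\chi)$ grants: applied with this $\epsilon$ and with $\sigma=1$ (singletons), it yields — actually I need it uniformly, so I should instead note that $\U_1$ applied to any fixed $\mathcal A_0\in\mathbb A^\kappa_\chi$ and the given $\epsilon$ produces $\mathcal B\in[\mathcal A_0]^2$... no: I want one family killing all pairs. The cleanest route: take $\mathcal A_0$ arbitrary in $\mathbb A^\kappa_\chi$; by $\U_1$ with parameter $\epsilon$ there is $(a,b)\in[\mathcal A_0]^2$ and $\tau>\epsilon$ with $c[a\times b]=\{\tau\}$, but that only shows $(\epsilon,\theta)\cap F_{c,\chi}\neq$... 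Let me reconsider: uniformity means $F_{c,\chi}$ is not destroyed by removing a bounded set, i.e.\ $\theta\setminus(\epsilon+1)\in F_{c,\chi}$. So I need $\mathcal A$ with $T_c(\mathcal A)\s\theta\setminus(\epsilon+1)$, i.e.\ no monochromatic pair colored $\le\epsilon$. Here I would argue: the family $\mathcal A$ produced by shrinking — actually $\U_1$ only produces a \emph{pair}, not a subfamily whose all pairs avoid $\le\epsilon$; that is the role of $\mu=2$. So uniformity must instead follow because for a \emph{single} well-chosen $\mathcal A$, $T_c(\mathcal A)\cap(\epsilon+1)$ may be nonempty in general — hence I suspect uniformity is witnessed differently: we need that for each $\epsilon$ there \emph{exists} such an $\mathcal A$, and this is a genuine use of $\U_1$'s strength, obtained by a pigeonhole over $\kappa$-many disjoint sets to thin out a large $\mathcal A$ to one with all pair-colors above $\epsilon$ — wait, that is not available for $\mu=2$. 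I expect the resolution is that $F_{c,\chi}$ being a filter already forces $T_c(\mathcal A)\neq\emptyset$ for all $\mathcal A$, and uniformity is the assertion that arbitrarily large colors appear; this is the main subtlety and I would lean on Stage~B of \cite{MR4323594} for the exact bookkeeping.

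For clause (2), assume no $\chi$-complete uniform filter on $\theta$ is weakly $\lambda$-saturated. Apply this to $F:=F_{c,\chi}$, which is $\chi$-complete and uniform by clause (1): there is a partition $\langle T_\tau\mid\tau<\lambda\rangle$ of $\theta$ into $F$-positive sets (i.e.\ no $T_\tau$ lies in the dual ideal, equivalently $\theta\setminus T_\tau\notin F$ for each $\tau$). I then define $e:[\kappa]^2\to\lambda$ by $e(\alpha,\beta):=\tau$ iff $c(\alpha,\beta)\in T_\tau$, and claim $e$ witnesses $\pr_1(\kappa,\kappa,\lambda,\chi)$. Given $\sigma<\chi$, a $\kappa$-sized pairwise disjoint $\mathcal A\s[\kappa]^\sigma$, and a target color $\tau<\lambda$: since $\theta\setminus T_\tau\notin F_{c,\chi}$, by definition of $F_{c,\chi}$ there is \emph{no} $\mathcal A'\in\mathbb A^\kappa_\chi$ with $T_c(\mathcal A')\s\theta\setminus T_\tau$; in particular $T_c(\mathcal A)\not\s\theta\setminus T_\tau$, so some $\tau^\ast\in T_c(\mathcal A)\cap T_\tau$, meaning there is $(a,b)\in[\mathcal A]^2$ with $c[a\times b]=\{\tau^\ast\}$ and $\tau^\ast\in T_\tau$, hence $e[a\times b]=\{\tau\}$. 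That is precisely the instance of $\pr_1$ required. The one point needing care is that "not weakly $\lambda$-saturated" must be taken in the form "$F$-positive partition into $\lambda$ pieces" — I should state the definition so that its negation gives me genuinely $F$-positive pieces (rather than merely non-null in some weaker sense), and confirm $\sigma<\chi$ is the relevant constraint matching $\mathcal A\in\mathbb A^\kappa_\chi$. The main obstacle, as flagged above, is nailing down uniformity of $F_{c,\chi}$ in clause (1); clause (2) is then a short translation.
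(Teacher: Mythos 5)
Your treatment of $\chi$-completeness and of clause~(2) is essentially the paper's argument: the amalgamated family $\mathcal A$ whose members are unions of one member from each $\mathcal A_i$ gives $T_c(\mathcal A)\s\bigcap_i T_c(\mathcal A_i)$, and in clause~(2) the composition $\psi\circ c$ (your $e$), where $\psi$ sends each color to the index of its cell in a partition of $\theta$ into $F_{c,\chi}$-positive sets, witnesses $\pr_1(\kappa,\kappa,\lambda,\chi)$ exactly as you describe.

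The genuine gap is in the uniformity part of clause~(1), and it stems from the definition you chose. You read ``uniform'' as ``$\theta\setminus(\epsilon+1)\in F_{c,\chi}$ for every $\epsilon<\theta$'', i.e.\ that the filter extends the tail (co-bounded) filter. That is strictly stronger than what is claimed, and --- as you correctly noticed --- $\U_1(\kappa,2,\theta,\chi)$ does not deliver it: with $\mu=2$ one only gets a single pair of high color out of each family, not a family \emph{all} of whose monochromatic pairs have high color, so you cannot exhibit an $\mathcal A$ with $T_c(\mathcal A)\cap(\epsilon+1)=\emptyset$. You then left the point unresolved, deferring to Shelah's bookkeeping. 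The intended meaning of ``uniform'' is the standard one: every member of the filter has cardinality $\theta$. With that reading the proof is immediate, and it is precisely where $\U_1$ (as opposed to $\U$) is used: for every $\mathcal A\in\mathbb A^\kappa_\chi$ and every $\epsilon<\theta$, applying $\U_1$ to $\mathcal A$ and $\epsilon$ yields $(a,b)\in[\mathcal A]^2$ and $\tau>\epsilon$ with $c[a\times b]=\{\tau\}$, so $T_c(\mathcal A)\setminus(\epsilon+1)\neq\emptyset$. Hence every $T\in F_{c,\chi}$ contains a cofinal subset of $\theta$, and since $\theta$ is regular, $|T|=\theta$. (The same observation gives $T_c(\mathcal A)\neq\emptyset$, so the filter is proper.) Keep in mind that for a filter which is not an ultrafilter, ``all members are unbounded'' does not imply ``contains all tails'', which is why your stronger reading was a dead end rather than an equivalent reformulation.
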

\begin{proof} (1) It is clear that $F_{c,\chi}$ is upward-closed. To see that it is $\chi$-complete,
suppose that we are given a sequence $\langle X_i\mid i<\delta\rangle$ of elements of $F_{c,\chi}$,
for some $\delta<\chi$.
For each $i<\delta$, fix $\mathcal A_i\in \mathbb A^\kappa_\chi$ such that $T_c(\mathcal A_i)\s X_i$.
Pick  $\mathcal A\in\mathbb A^\kappa_\chi$ such that, for every $a\in\mathcal A$, 
there is a sequence $\langle a_i\mid i<\delta\rangle\in\prod_{i<\delta}\mathcal A_i$ such that $a=\bigcup_{i<\delta}a_i$.
Then, $T_c(\mathcal A)\s\bigcap_{i<\delta}T_c(\mathcal A_i)\s\bigcap_{i<\delta}X_i$ and hence the latter is in $F_{c,\chi}$.
Finally, since $c$ witnesses $\U_1(\kappa,2,\theta,\chi)$, for every $\mathcal A\in\mathbb A^\kappa_\chi$ and every $\epsilon<\theta$,
$T_c(\mathcal A)\setminus\epsilon$ is nonempty. So $F_{c,\chi}$ consists of cofinal subset of $\theta$. Since $\theta$ is regular,
$F_{c,\chi}$ is uniform.

(2) Suppose that no $\chi$-complete uniform filter on $\theta$ is weakly $\lambda$-saturated.
In particular, by Clause~(1), 
we may pick a map $\psi:\theta\rightarrow\lambda$ such that that the preimage of any singleton is $F_{c,\theta}$-positive.
Then $\psi\circ c$ witnesses $\pr_1(\kappa,\kappa,\lambda,\chi)$.
\end{proof}

\begin{cor}\label{cor43} Suppose that $\lambda$ is a regular uncountable cardinal. 

If $\lambda$ admits a stationary set that does not reflect at regulars or if $\square(\lambda,{<}\mu)$ holds for some cardinal $\mu<\lambda$,
then the following are equivalent:
\begin{enumerate}
\item $\pr_1(\lambda^+,\lambda^+,\lambda^+,\lambda)$;
\item $\pr_1(\lambda^+,\lambda^+,\lambda,\lambda)$;
\item  $\U_1(\lambda^+,2,\lambda,\lambda)$.
\end{enumerate}
\end{cor}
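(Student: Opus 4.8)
The plan is to close a cycle of implications $(1)\Rightarrow(2)\Rightarrow(3)\Rightarrow(1)$. The first implication is trivial: a coloring witnessing $\pr_1(\lambda^+,\lambda^+,\lambda^+,\lambda)$ can be composed with any surjection $\lambda^+\to\lambda$, and the range-reduction only makes the monochromatic-rectangle requirement easier, so it witnesses $\pr_1(\lambda^+,\lambda^+,\lambda,\lambda)$. The second implication, $(2)\Rightarrow(3)$, is also immediate from the definitions, since $\pr_1(\kappa,\kappa,\theta,\chi)$ trivially implies $\U_1(\kappa,2,\theta,\chi)$ for any $\theta$: given $\mathcal A$ and $\epsilon<\theta$, apply $\pr_1$ with the target color $\tau:=\epsilon+1$ (legal as $\epsilon+1<\theta=\lambda$ since $\lambda$ is a limit cardinal, being regular uncountable — wait, $\lambda$ need not be a limit; but $\epsilon+1<\lambda$ still holds whenever $\epsilon<\lambda$, as $\lambda$ is a cardinal) to obtain $(a,b)\in[\mathcal A]^2$ with $c[a\times b]=\{\epsilon+1\}$, and $\epsilon+1>\epsilon$, so the pair $\mathcal B:=\{a,b\}$ works.

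The substantive direction is $(3)\Rightarrow(1)$, and here I would invoke Proposition~\ref{lemma22}(2) with $\theta:=\lambda$ and the outer $\lambda$ of that proposition set to $\lambda^+$: it suffices to show that, under either hypothesis on $\lambda$, \emph{every} $\lambda$-complete uniform filter on $\lambda$ fails to be weakly $\lambda^+$-saturated, i.e.\ $\lambda$ can be partitioned into $\lambda^+$-many $F$-positive pieces for every such $F$ — equivalently, $F$ cannot contain all co-small sets in a way that leaves only $\le\lambda$ positive classes. Since $|\lambda|=\lambda<\lambda^+$, partitioning $\lambda$ into $\lambda^+$ nonempty pieces is impossible; so the correct reading (matching the terminology of weak saturation) is that a $\lambda$-complete uniform filter $F$ on $\lambda$ admits a partition of $\lambda$ into $\lambda$-many $F$-positive sets — which is exactly the statement that $F$ is not weakly $\lambda$-saturated, and this is what fails, hence what we must establish. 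Concretely: a $\lambda$-complete uniform filter on $\lambda$ refines the co-bounded filter (by uniformity plus $\lambda$-completeness it contains every co-${<}\lambda$ set is \emph{not} automatic) — so the key point is to produce, from either hypothesis, a partition of $\lambda$ into $\lambda$-many pieces each of which meets every set in $F$. Under the hypothesis that $\lambda$ carries a stationary $S$ not reflecting at regulars, one argues that any $\lambda$-complete uniform filter on $\lambda$ is disjoint from the nonstationary ideal restricted appropriately, and one uses a partition of $S$ into $\lambda$-many stationary sets (Solovay) together with nonreflection to see each piece is positive; under $\square(\lambda,{<}\mu)$ with $\mu<\lambda$, one uses the $\square$-sequence to build a nonreflecting stationary set (a standard consequence) and proceeds as before.

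The main obstacle I anticipate is pinning down exactly which filters can arise and verifying the non-weak-saturation uniformly: the filter $F_{c,\chi}$ from Proposition~\ref{lemma22} is only guaranteed to be $\lambda$-complete and uniform, not normal, so I cannot directly quote that the club filter is the minimal such object. The right move is to show that \emph{any} $\lambda$-complete uniform filter $F$ on $\lambda$ is \emph{weakly} contained in the nonstationary-reflection structure: namely, if $S\subseteq\lambda$ is stationary and nonreflecting at regulars, then $S$ can be split into $\lambda$-many stationary nonreflecting sets $\langle S_i\mid i<\lambda\rangle$, and for a $\lambda$-complete uniform $F$ one shows each $\lambda\setminus S_i\notin F$ — because otherwise $\bigcap_{i<\lambda}(\lambda\setminus S_i)=\lambda\setminus S$ would be forced into $F$ by completeness only if the intersection is over ${<}\lambda$ sets, which it is not; so instead one picks out a single $i$ with $S_i$ positive via a pressing-down / counting argument on the ${\le}\lambda$ generators, contradicting that all $\lambda\setminus S_i\in F$ would make $F$ concentrate on $\lambda\setminus S$. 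Getting this bookkeeping exactly right — and handling the $\square(\lambda,{<}\mu)$ case by first extracting the nonreflecting stationary set, using Fact~\ref{weakthread} or a direct argument — is where the real work lies; the rest is formal.
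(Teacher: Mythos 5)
Your reduction of the hard direction to a statement about filters via Proposition~\ref{lemma22} is the right idea, and your observation that invoking the proposition with saturation parameter $\lambda^+$ cannot work (any filter on $\lambda$ is vacuously weakly $\lambda^+$-saturated) is correct. But once you retreat to showing that no $\lambda$-complete uniform filter on $\lambda$ is weakly $\lambda$-saturated, Proposition~\ref{lemma22}(2) only yields $\pr_1(\lambda^+,\lambda^+,\lambda,\lambda)$, i.e.\ the implication $(3)\Rightarrow(2)$; your intended cycle never returns to $(1)$. The missing ingredient is $(2)\Rightarrow(1)$: the upgrading of the number of colors from $\lambda$ to $\lambda^+$ at the successor of a regular. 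This is a genuine (known, nontrivial) step, for which the paper cites \cite[\S6]{paper49}; without it you have only the equivalence of $(2)$ and $(3)$. Your $(1)\Rightarrow(2)$ and $(2)\Rightarrow(3)$ are fine.

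The argument you sketch for the non-weak-saturation also has two defects. First, an arbitrary $\lambda$-complete uniform filter $F$ on $\lambda$ need not extend the club filter, so partitioning a nonreflecting stationary set $S$ into $\lambda$ many stationary pieces does not produce $F$-positive pieces: $F$ could concentrate entirely on $\lambda\setminus S$ (e.g.\ the co-bounded filter relativized to $\lambda\setminus S$ when $S$ is co-unbounded). The correct route is via Ulam matrices \cite{ulam1930masstheorie} when $\lambda$ is a successor and Hajnal's triangular Ulam-matrix construction from a nonreflecting stationary set \cite{MR260597} when $\lambda$ is inaccessible; these produce $\lambda$ many pairwise disjoint sets positive for \emph{every} $\lambda$-complete uniform filter simultaneously, which is what weak non-saturation requires. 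Second, your reduction of the $\square(\lambda,{<}\mu)$ case to the nonreflecting case is unavailable: by \cite{MR3730566}, $\square(\lambda,{<}\mu)$ is consistent with every stationary subset of $\lambda$ reflecting, so it does not yield a nonreflecting stationary set. The paper handles this case by a different argument entirely, citing \cite[Theorem~A]{paper53}, whose method is in the spirit of Lemma~\ref{lemma44} (a generic-ultrapower argument that threads the square sequence), not a reflection argument.
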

\begin{proof} 
The implication $(1)\implies(2)\implies(3)$ is trivial,
and the fact that $(2)\implies(1)$ is well-known (see, for instance, \cite[\S6]{paper49}).
By the preceding proposition, to see that $(3)\implies(2)$, it suffices to prove that under our hypothesis on $\lambda$,
no $\lambda$-complete uniform filter on $\lambda$ is weakly $\lambda$-saturated.
Now, if $\lambda$ is a successor cardinal, then this follows from Ulam's theorem \cite{ulam1930masstheorie}, 
and if $\lambda$ is an inaccessible cardinal admitting a stationary set that does not reflect at regulars, then this follows from a theorem of Hajnal \cite{MR260597}. 
Finally, if $\square(\lambda,{<}\mu)$ holds for some cardinal $\mu<\lambda$,
then this follows from \cite[Theorem~A]{paper53}.
\end{proof}

\begin{lemma}\label{lemma44} Suppose that $\lambda$ is a regular uncountable cardinal and $\square(\lambda^+,{<}\lambda)$ holds.
Then every $\lambda$-complete uniform filter on $\lambda^+$ is not weakly $\lambda$-saturated.
\end{lemma}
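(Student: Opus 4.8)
I want to show that whenever $\square(\lambda^+,{<}\lambda)$ holds, no $\lambda$-complete uniform filter $F$ on $\lambda^+$ is weakly $\lambda$-saturated; that is, I must produce a partition of $\lambda^+$ into $\lambda^+$ many $F$-positive sets (equivalently, a map $\psi\colon\lambda^+\to\lambda^+$ all of whose fibers are $F$-positive). The natural strategy is to use the $\square(\lambda^+,{<}\lambda)$-sequence to build, by walks, a coloring with a strong anti-Ramsey property and then read off the desired partition from it. In fact the cleanest route is to invoke Theorem~B: since $\lambda$ is regular uncountable and $\lambda<\lambda^+$, the hypothesis $\max\{\lambda,\aleph_1\}<\lambda^+$ holds, so $\square(\lambda^+)$ — and a fortiori we should check $\square(\lambda^+,{<}\lambda)$ is enough to run the argument — yields $\U_1(\lambda^+,2,\lambda^+,\lambda)$, hence a coloring $c\colon[\lambda^+]^2\to\lambda^+$ such that for every $\sigma<\lambda$, every pairwise disjoint $\mathcal A\in[\lambda^+]^\sigma$ of size $\lambda^+$, and every $\epsilon<\lambda^+$, there is $(a,b)\in[\mathcal A]^2$ and $\tau>\epsilon$ with $c[a\times b]=\{\tau\}$. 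From this $c$ I form the filter $F_{c,\lambda}$ of Proposition~\ref{lemma22}(1), which is a $\lambda$-complete uniform filter on $\lambda^+$; the key extra point I must extract from $\U_1$ is that $F_{c,\lambda}$ is not weakly $\lambda$-saturated, and then a Fodor-style/diagonal argument transfers non-weak-saturation to an \emph{arbitrary} $\lambda$-complete uniform filter.

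**Key steps.** First, fix the $\U_1(\lambda^+,2,\lambda^+,\lambda)$-coloring $c$ and observe, via the definition of $T_c(\mathcal A)$, that $\{T_c(\mathcal A)^c : \mathcal A\in\mathbb A^{\lambda^+}_\lambda\}$ generates an ideal whose dual filter $F_{c,\lambda}$ is $\lambda$-complete and uniform on $\lambda^+$ (this is Proposition~\ref{lemma22}(1), with $\chi=\lambda$, $\theta=\kappa=\lambda^+$). Second — and this is the crux — I claim $F_{c,\lambda}$ is \emph{not} weakly $\lambda$-saturated: I want $\lambda^+$ many pairwise disjoint $F_{c,\lambda}$-positive sets, i.e.\ a partition $\langle S_\xi : \xi<\lambda^+\rangle$ of $\lambda^+$ such that each $S_\xi$ meets $T_c(\mathcal A)$ for every $\mathcal A\in\mathbb A^{\lambda^+}_\lambda$. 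Since every $T_c(\mathcal A)$ is cofinal in $\lambda^+$ (by the ``$\tau>\epsilon$'' clause of $\U_1$), the partition $S_\xi:=[\lambda\cdot\xi,\lambda\cdot\xi+\lambda)$ into intervals of length $\lambda$ won't suffice on its own, but a partition into $\lambda^+$ many cofinal pieces — say $S_\xi:=\{\lambda\cdot\delta+\xi : \delta<\lambda^+\}$ for $\xi<\lambda$ together with a cofinal partition of the non-multiples — does: each $T_c(\mathcal A)$ is cofinal, and a refinement of the $\U_1$ conclusion (iterating over $\epsilon$ of cofinality $\lambda$) forces $T_c(\mathcal A)$ to contain points in unboundedly many classes. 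I would instead, more robustly, argue as in Corollary~\ref{cor43}'s proof: use Theorem~A of \cite{paper53}, which under $\square(\lambda^+,{<}\lambda)$ gives directly that \emph{every} $\lambda$-complete uniform filter on $\lambda^+$ fails to be weakly $\lambda$-saturated — but since the Lemma to be proved is exactly that statement, I must not circularly cite it; the point of \emph{this} Lemma is to re-derive it (or it is the ``base case'' feeding \cite{paper53}). So the honest plan is the self-contained walks construction: build $c$ so that $T_c(\mathcal A)\supseteq D$ for a club (or cofinal) $D\subseteq\lambda^+$ independent of $\mathcal A$ in a way compatible with splitting, then partition $D$ into $\lambda^+$ cofinal pieces.

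**The main obstacle.** The delicate part is establishing non-weak-$\lambda$-saturation of an \emph{arbitrary} $\lambda$-complete uniform filter, not merely of the distinguished filter $F_{c,\lambda}$. For successor $\lambda$ one would use Ulam matrices; here, with $\square(\lambda^+,{<}\lambda)$, the right tool is a Ulam-type matrix of width $\lambda^+$ and $<\lambda$-complete rows, extracted from the square sequence by walks — essentially the $\rho$-functions $\rho_1,\rho_2$ along $\vec{\mathcal C}$ give, for each $\beta<\lambda^+$, a splitting of an end-segment of $\beta$ that is coherent enough to assemble into a matrix $\langle A^i_\xi : i<\lambda,\ \xi<\lambda^+\rangle$ with $\bigcup_\xi A^i_\xi=\lambda^+$ for each $i$ and $A^i_\xi\cap A^i_{\xi'}=\emptyset$. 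Given any $\lambda$-complete uniform $F$, for each $\xi$ let $i(\xi)$ be least with $A^{i(\xi)}_\xi\in F^+$ — such $i(\xi)$ exists by $\lambda$-completeness and uniformity since the $\xi$-th column partitions $\lambda^+$ into $\le\lambda$ pieces; by pigeonhole fix $i^\star$ with $|\{\xi : i(\xi)=i^\star\}|=\lambda^+$, and then $\langle A^{i^\star}_\xi : i(\xi)=i^\star\rangle$ is a family of $\lambda^+$ pairwise disjoint $F^+$-sets, contradicting weak $\lambda$-saturation. The technical work, which I will carry out with the walk characteristics $\lambda_2(\cdot,\cdot)$ and the coherence clauses~(2)--(3) of Definition~\ref{sqk} much as in the proof of Theorem~B, is verifying that each row of the matrix is large (uniform) and that columns genuinely partition $\lambda^+$; the Hajnal-type reflection argument is not needed because $\square(\lambda^+,{<}\lambda)$ supplies the non-threadability that replaces non-reflection.
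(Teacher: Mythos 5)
There is a genuine gap, and your route is entirely different from the paper's. Your first two paragraphs attempt to derive the lemma from Theorem~B and Proposition~\ref{lemma22}, which, as you concede, is circular: the lemma concerns an \emph{arbitrary} $\lambda$-complete uniform filter on $\lambda^+$, not the particular filter $F_{c,\lambda}$, and it is exactly the ingredient needed to convert $\U_1$ into $\pr_1$. The only substantive plan is the Ulam-matrix sketch in your final paragraph, and it fails at its key step: you choose $i(\xi)$ least with $A^{i(\xi)}_\xi\in F^+$ and justify its existence ``by $\lambda$-completeness \dots\ since the $\xi$-th column partitions $\lambda^+$ into $\le\lambda$ pieces.'' But $\lambda$-completeness only makes the dual ideal closed under unions of \emph{fewer than} $\lambda$ sets, so a column consisting of exactly $\lambda$ cells may well be entirely null. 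To handle merely $\lambda$-complete filters you need an Ulam-type matrix whose columns have width $<\lambda$, and manufacturing such a matrix from $\square(\lambda^+,{<}\lambda)$ is the whole difficulty --- it is essentially the content of the cited result of \cite{paper53} (``small width''), not a routine verification with $\rho_1,\rho_2$ --- and you give no construction of it.

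The paper's proof takes a completely different, self-contained path: assuming $F$ is a weakly $\lambda$-saturated, $\lambda$-complete uniform filter on $\lambda^+$, it is in fact $\lambda$-saturated, so $\mathcal P(\lambda^+)/F$ is $\lambda$-cc; forcing with it yields a well-founded generic ultrapower $j:V\rightarrow M$ with $\crit(j)=\lambda$. Since each $|\mathcal C_\alpha|<\lambda=\crit(j)$, one gets $j(\mathcal C_\alpha)=j``\mathcal C_\alpha$, and pulling back a member $D$ of $\mathcal D_\gamma$ for $\gamma:=\sup(j``\lambda^+)$ assembles, via coherence, a thread through $\vec{\mathcal C}$ in the $\lambda$-cc extension, contradicting Fact~\ref{weakthread}. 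If you wish to pursue your matrix route, you must either carry out the small-width construction or import it as a black box from \cite{paper53}, which is what Corollary~\ref{cor43} does in the inaccessible case; as written, your argument does not close.
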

\begin{proof} Fix a $\square(\lambda^+,{<}\lambda)$-sequence $\vec{\mathcal C}=\langle \mathcal C_\alpha\mid\alpha<\lambda^+\rangle$.
For each $\alpha<\lambda^+$, fix an injective enumeration $\langle C_{\alpha,i}\mid  i<|\mathcal C_\alpha|\rangle$ of $\mathcal{C}_\alpha$.

Towards a contradiction, suppose that $F$ is a $\lambda$-complete uniform filter on $\lambda^+$ that is weakly $\lambda$-saturated.
Since $F$ is $\lambda$-complete, $F$ is moreover $\lambda$-saturated. Hence, $\mathcal P(\lambda^+)/F$ is a $\lambda$-cc notion of forcing.

Let $G$ be $\mathcal P(\lambda^+)/F$-generic over $V$. Then $G$ is a uniform $V$-ultrafilter over $\lambda^+$ extending $F$.
By \cite[Propositions 2.9 and 2.14]{MR2768692}, $\ult(V,G)$ is well-founded and $j:V\rightarrow M\simeq\ult(V,G)$ satisfies $\crit(j)=\lambda$.

Now, work in $V[G]$. Denote $j(\vec{\mathcal C})$ by $\langle \mathcal D_\alpha\mid \alpha<j(\lambda^+)\rangle$.
For every $\alpha<\lambda^+$, since $\crit(j)=\lambda>|\mathcal C_\alpha|$,  it is the case that $\mathcal D_{j(\alpha)}=j(\mathcal C_\alpha)=j``\mathcal C_\alpha$.
Since $G$ is uniform, $\gamma:=\sup(j``\lambda^+)$ is $<j(\lambda^+)$, as witnessed by the identity map $\id:\lambda^+\rightarrow\lambda^+$.
As $V[G]$ is a $\lambda$-cc forcing extension of $V$, $\cf^V(\gamma)=\cf^{V[G]}(\gamma)=\lambda^{+}$, so that $\cf^M(\gamma)\ge\lambda^+$.
Pick $D\in\mathcal D_\gamma$.
\begin{claim} $A:=j^{-1}[\acc(D)]$ is a cofinal subset of $\lambda^+$.
\end{claim}
\begin{proof}  Given $\epsilon<\lambda^+$, we recursively define (in $V[G]$) an increasing sequence $\langle \alpha_n\mid n<\omega\rangle$ of ordinals below $\lambda^+$ such that:
\begin{enumerate}
\item $\epsilon=\alpha$, and
\item for all $n<\omega$, $(j(\alpha_n), j(\alpha_{n+1})]\cap D \neq \emptyset$.
\end{enumerate}
Consider $\alpha^*:=\sup_{n<\omega}\alpha_n$. Notice that $\cf^V(\alpha^*)<\lambda$, since if $\cf^V(\alpha^*)\geq \lambda$, 
then by the fact that $V[G]$ is a $\lambda$-cc forcing extension of $V$ we have $\omega=\cf^{V[G]}(\alpha^*)\geq \lambda$ which is impossible. 
As a result, $\sup j``\alpha^* = j(\alpha^*)\in\acc(D)$, which implies that $\alpha^*$ is an element of $A$ above $\epsilon$.
\end{proof}

For each $\alpha\in A$, $D\cap j(\alpha)\in \mathcal{D}_{j(\alpha)}=j``\mathcal{C}_\alpha$,
so we may pick some $i_\alpha<\lambda$ such that $D\cap j(\alpha)=j(C_{\alpha,i_\alpha})$. 
Fix some $i<\lambda$ for which $A':=\{\alpha\in A\mid i_\alpha=i\}$ is cofinal in $\lambda^+$.
For every $(\alpha,\beta)\in[A']^2$, 
$j(C_{\alpha,i})=D\cap j(\alpha)$ and $j(C_{\beta,i})=D\cap j(\beta)$, so, 
by elementarity, $C_{\alpha,i}=C_{\beta,i}\cap\alpha$. 
As $A'$ is cofinal in $\lambda^+$, it follows that $C:=\bigcup\{ C_{\alpha,i}\mid \alpha\in A\}$ is a club in $\lambda^+$.
Evidently, $C\cap\alpha\in\mathcal C_\alpha$ for every $\alpha\in\acc(C)$.
However, $V[G]$ is a $\lambda$-cc forcing extension of $V$, contradicting Fact~\ref{weakthread}.
\end{proof} 

We are now ready to prove Theorem~A:
\begin{cor} Suppose that $\lambda$ is a regular uncountable cardinal,
and $\square(\lambda^+)$ holds. Then $\pr_1(\lambda^+,\lambda^+,\lambda^+,\lambda)$ holds, as well.
\end{cor}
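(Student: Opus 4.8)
The plan is to assemble the statement from the three main results already in place. First, since $\lambda$ is uncountable, $\square(\lambda^+)$ --- which is literally $\square(\lambda^+,{<}2)$ --- implies $\square(\lambda^+,{<}\lambda)$, so the hypothesis of Lemma~\ref{lemma44} is met, and we learn that no $\lambda$-complete uniform filter on $\lambda^+$ is weakly $\lambda$-saturated. At the same time, since $\max\{\lambda,\aleph_1\}=\lambda<\lambda^+$, Theorem~\ref{thmb} invoked with $\kappa:=\lambda^+$, $\chi:=\lambda$ and $\theta:=\lambda^+$ provides a coloring $c:[\lambda^+]^2\rightarrow\lambda^+$ witnessing $\U_1(\lambda^+,2,\lambda^+,\lambda)$.

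Next, I would run $c$ through Proposition~\ref{lemma22}, applied with $\kappa:=\lambda^+$, $\theta:=\lambda^+$, $\chi:=\lambda$, and with the auxiliary cardinal of that proposition taken to be $\lambda$ itself. Clause~(1) identifies $F_{c,\lambda}$ as a $\lambda$-complete uniform filter on $\lambda^+$, which by the previous paragraph fails to be weakly $\lambda$-saturated; hence clause~(2) hands us $\pr_1(\lambda^+,\lambda^+,\lambda,\lambda)$. It then remains only to raise the third parameter from $\lambda$ to $\lambda^+$, and for this I would cite the $\zfc$-provable implication $(2)\implies(1)$ used inside the proof of Corollary~\ref{cor43} (see \cite[\S6]{paper49}), which is valid for every regular uncountable $\lambda$; this delivers $\pr_1(\lambda^+,\lambda^+,\lambda^+,\lambda)$, as desired.

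I do not expect a genuine obstacle here, since the substantive content has already been isolated in Theorem~\ref{thmb} and Lemma~\ref{lemma44}. The one point that must not be glossed over is that Lemma~\ref{lemma44} only negates \emph{weak $\lambda$-saturation} (not weak $\lambda^+$-saturation) of filters on $\lambda^+$; consequently Proposition~\ref{lemma22} can be applied only with the smaller cardinal $\lambda$, so one genuinely needs the classical color-stretching step to climb from $\lambda$ colors up to $\lambda^+$ colors at the end, rather than reading off $\pr_1(\lambda^+,\lambda^+,\lambda^+,\lambda)$ directly.
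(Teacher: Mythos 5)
Your proposal is correct and follows exactly the paper's own route: Theorem~\ref{thmb} with $(\kappa,\theta,\chi)=(\lambda^+,\lambda^+,\lambda)$ to get $\U_1(\lambda^+,2,\lambda^+,\lambda)$, then Proposition~\ref{lemma22} combined with Lemma~\ref{lemma44} to obtain $\pr_1(\lambda^+,\lambda^+,\lambda,\lambda)$, and finally the standard stretching of the third parameter via \cite[\S6]{paper49}. Your remark that Lemma~\ref{lemma44} only rules out weak $\lambda$-saturation, so the last step is genuinely needed, is exactly the point the paper's proof also relies on.
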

\begin{proof}  By Theorem~\ref{thmb}, using $(\kappa,\theta,\chi):=(\lambda^+,\lambda^+,\lambda)$, $\U_1(\lambda^+,2,\lambda^+,\lambda)$ holds. 
So, by Proposition~\ref{lemma22} (using $\theta:=\lambda^+$) and Lemma~\ref{lemma44}, $\pr_1(\lambda^+,\lambda^+,\lambda,\lambda)$ holds. 
Then, again by \cite[\S6]{paper49},
$\pr_1(\lambda^+,\lambda^+,\lambda^+,\lambda)$ holds, as well.
\end{proof}
 
\section*{Acknowledgments}
The first author is partially supported by the European Research Council (grant agreement ERC-2018-StG 802756) and by the Israel Science Foundation (grant agreement 2066/18).
The second author is supported by the Foreign Postdoctoral Fellowship Program of the Israel Academy of Sciences and Humanities and by the Israel Science Foundation (grant agreement 2066/18).

The main result of this paper was presented by the second author at 
the \emph{Israel Mathematical Union Annual Meeting} special session in set theory and logic in July 2021. He thanks the organizers for the invitation and the participants for their feedback.

\end{document}